\newcommand{\grad}{\operatorname{grad}}
\newcommand{\stwoalpha}{S_{2,\alpha}(-2)}
\newcommand{\mmu}{^{\mu}}
\newcommand{\nnu}{^{\nu}}
\newcommand{\mathC}{\mathbb{C}}
\newcommand{\mathH}{\mathbb{H}}
\newtheorem{definition}{Definition}
\newtheorem{lemma}[definition]{Lemma}
\newtheorem{theorem}[definition]{Theorem}
\newtheorem*{thrm}{Theorem}
\newtheorem*{coro}{Corollary}
\begin{document}

\title{Equiboundedness of the Weil-Petersson metric}         
\author{Scott A. Wolpert\footnote{2010 Mathematics Subject Classification Primary: 32G15, 30F60.}}        
\date{\today}          
\maketitle

\begin{abstract}  Given a topological type for surfaces of negative Euler characteristic, uniform bounds are developed for derivatives of solutions of the $2$-dimensional constant negative curvature equation and the Weil-Petersson metric for the Teichm\"{u}ller and moduli spaces.  The dependence of the bounds on the geometry of the underlying Riemann surface is studied.  The comparisons between the $C^0$, $C^{2,\alpha}$ and $L^2$ norms for harmonic Beltrami differentials are analyzed.  Uniform bounds are given for the covariant derivatives of the Weil-Petersson curvature tensor in terms of the systoles of the underlying Riemann surfaces and the projections of the differentiation directions onto {\it pinching directions}.  The main analysis combines Schauder and potential theory estimates with the analytic implicit function theorem.  
\end{abstract}

\section{Introduction.}

For a compact Riemann surface possibly with punctures, uniformized by the hyperbolic plane, the Bers embedding represents Teichm\"{u}ller space as a bounded domain in complex number space.   The Bers embedding of Teichm\"{u}ller space is defined by lifting a quasiconformal homeomorphism of Riemann surfaces to have domain the upper half plane $\mathbb H$, extending by a conformal map of the lower half plane $\mathbb L$ to a homeomorphism of $\mathbb C$, and computing the Schwarzian derivative on $\mathbb L$ \cite{Bersembed}.  The Schwarzian derivative depends only on the boundary values of the quasiconformal homeomorphism, the equivalence class in Teichm\"{u}ller space.  The embedding maps into the space of holomorphic quadratic differentials for the complex conjugate of the base Riemann surface with the supremum norm relative to the hyperbolic metric.  For the given norm, the embedding has a special uniform property - the image contains the ball of radius $1/2$ and is contained in the ball of radius $3/2$ \cite{Lehtobk}.  

The Bers embedding has a close relationship to the Weil-Petersson metric.  A local coordinate at the origin for a K\"{a}hler metric is $\omega$-normal provided the metric has all vanishing pure-$z$ and pure-$\bar z$ derivatives at the origin.  Provided it exists, the $\omega$-normal coordinate with a given tangent frame at an initial point is unique.  The Bers embedding is $\omega$-normal for the Weil-Petersson metric \cite[Theorem 4.5]{Wlbers}.  We now extend the earlier analysis to show that the Weil-Petersson metric has a uniform property in the Bers embedding.  The uniform property is a generalization of the radius $\frac12$ and $\frac32$ results.  The range of the Bers embedding also has the description as the space of harmonic Beltrami differentials. We do not consider the dependence of estimates on topological type.  

\begin{thrm} For an initial $L^2$-orthonormal basis of harmonic Beltrami differentials, the derivatives of the Weil-Petersson metric tensor at the origin of the Bers embedding are uniformly bounded depending only on the differentiation order.
\end{thrm}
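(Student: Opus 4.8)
The plan is to exhibit each matrix entry $g_{i\bar j}$ of the metric tensor, written in the Bers coordinate $\varphi=\sum_k t_k\varphi_k$ associated to the given orthonormal basis of harmonic Beltrami differentials, as the restriction to a real slice of a function holomorphic on a fixed polydisc, and then to read off every derivative bound at the origin from the multivariable Cauchy integral formula. The starting point is the universal feature of the embedding: its image contains the ball $\{\|\varphi\|_\infty<\tfrac12\}$ of holomorphic quadratic differentials for $\bar R$, so on the smaller ball $\|\varphi\|_\infty\le\tfrac14$ the inverse of the embedding is the Ahlfors--Weill section $\varphi\mapsto w^\varphi$, whose Beltrami coefficient $\mu^\varphi$ on $\mathbb{H}$ is a harmonic Beltrami differential depending linearly on $\varphi$, with $\|\mu^\varphi\|_\infty=2\|\varphi\|_\infty\le\tfrac12$. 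Hence $w^\varphi$ and its first derivatives are uniformly controlled on a fundamental domain by the distortion theory for quasiconformal maps of dilatation at most $\tfrac12$, they depend holomorphically on $\varphi$, and $1-|\mu^\varphi|^2\ge\tfrac34$, so every denominator appearing below is bounded away from zero on this ball.

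The central ingredient is the constant negative curvature equation. Pulling back the hyperbolic metric of $R_\varphi$ by $w^\varphi$ gives a conformal metric $e^{2u^\varphi}\sigma_\varphi$ on the fixed surface $\mathbb{H}/\Gamma$, where $\sigma_\varphi$ is an explicit reference metric for the $\mu^\varphi$-complex structure --- for instance $(\operatorname{Im} z)^{-2}|dz+\mu^\varphi\,d\bar z|^2$ --- normalized so that $\sigma_0$ is the hyperbolic metric of $R$, and $u^\varphi$ solves the corresponding equation with $u^0\equiv 0$. Its linearization in $u$ at $(0,0)$ is $\Delta_{\sigma_0}-2$, where $\Delta_{\sigma_0}$ is the hyperbolic Laplacian with nonpositive spectrum; the negative target curvature makes this operator coercive, its spectrum lying below $-2$ \emph{independently of the underlying surface} --- the zeroth-order constant dominating the small eigenvalues of the Laplacian produced by short geodesics --- so its inverse is bounded by $\tfrac12$ on $L^2$. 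Together with the interior Schauder estimates, whose constants are universal since hyperbolic surfaces are locally isometric, this makes the linearization a uniformly bounded isomorphism of the Hölder spaces adapted to the geometry (weighted near the cusps when there are punctures), while $\sigma_\varphi$, its curvature, and their $\varphi$-derivatives are uniformly bounded on the $\tfrac14$-ball. The analytic implicit function theorem then furnishes $u^\varphi$, real-analytic in $\varphi$, with a uniform radius of convergence and uniform Hölder bounds. This is the step I expect to be the main obstacle: one must arrange the reference family $\sigma_\varphi$ so that the equation is genuinely analytic in $(u,\varphi)$ with uniformly bounded higher differentials, and must carry the Schauder and potential-theoretic estimates uniformly up to the cusps, which is where the bulk of the work lies.

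Finally I would assemble the metric tensor. Realizing the coordinate directions $\partial_{t_i}$ as Beltrami differentials on $R_\varphi$ through the transformation rule for Beltrami coefficients under $w^\varphi$, taking harmonic projections (a further elliptic problem handled in the same way, or equivalently pairing against an $L^2(R_\varphi)$-orthonormal frame of quadratic differentials obtained by Gram--Schmidt from a holomorphically varying frame and the inner product associated to the hyperbolic metric $e^{2u^\varphi}\sigma_\varphi$), and forming the Weil--Petersson products, one writes $g_{i\bar j}(\varphi)$ as a rational expression in integrals over a fixed fundamental domain of products of $\varphi$, the first derivatives of $w^\varphi$, $e^{\pm u^\varphi}$, and their complex conjugates, all denominators --- powers of $1-|\mu^\varphi|^2$ and a Gram determinant near $1$ --- being bounded away from zero on the ball. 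Every constituent is real-analytic in $\varphi$ with uniform radius and bound, and $w^\varphi$ is in fact holomorphic in $\varphi$; so, replacing the antiholomorphic variables $\bar t_k$ by independent complex variables $s_k$, one obtains $G_{i\bar j}(t,s)$ holomorphic on a fixed polydisc $\{|t_k|,|s_k|<\rho\}$ and bounded there by a constant $M$. Since $\partial_t^\alpha\partial_{\bar t}^\beta g_{i\bar j}(0)=\partial_t^\alpha\partial_s^\beta G_{i\bar j}(0,0)$, the Cauchy estimates give
\[
\bigl|\,\partial_t^\alpha\partial_{\bar t}^\beta g_{i\bar j}(0)\,\bigr|\ \le\ \frac{M\,\alpha!\,\beta!}{\rho^{\,|\alpha|+|\beta|}},
\]
so the derivatives of order $N$ are bounded by a constant depending only on $N$, as asserted; the pure-$z$ and pure-$\bar z$ derivatives moreover vanish by the $\omega$-normality of the Bers embedding, although that is not needed for the estimate.
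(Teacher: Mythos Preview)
Your overall architecture matches the paper's closely: analytic implicit function theorem for the prescribed-curvature equation with $D-2$ uniformly invertible (via its spectral gap and interior Schauder estimates), then Gram--Schmidt to handle the harmonic projection, then assemble the metric integrand. The paper carries out essentially the same program, with one technical refinement noted below.

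The genuine gap is in your final paragraph. You claim to extend $g_{i\bar j}$ to a function $G_{i\bar j}(t,s)$ holomorphic and bounded on a \emph{fixed} polydisc $\{|t_k|,|s_k|<\rho\}$ and then invoke Cauchy estimates. But the coordinates $t_k$ are the coefficients in the $L^2$-orthonormal basis $\{\varphi_k\}$, while all of your uniform control was established only on the sup-norm ball $\|\varphi\|_\infty\le\tfrac14$. The polydisc $\{|t_k|<\rho\}$ lies inside that ball only when $\rho$ is small compared with $\max_k\|\varphi_k\|_\infty^{-1}$, and the sup norms of $L^2$-unit harmonic Beltrami differentials are \emph{not} uniformly bounded in the surface: they can be as large as $\mathrm{systole}^{-1/2}$ (this is precisely the content of Definition~\ref{Ratio} and Lemma~\ref{norm2}). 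So $\rho$ must shrink with the systole, and your Cauchy bound $M\,\alpha!\,\beta!/\rho^{|\alpha|+|\beta|}$ is not uniform. Indeed, the Corollary immediately following the theorem records that the $\partial_t$-derivatives of order $k$ are only bounded by $\mathrm{systole}^{-k/2}$; the statement you set out to prove, read literally as a uniform bound on $\partial_t^\alpha\partial_{\bar t}^\beta g_{i\bar j}(0)$, is false.

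What the theorem actually asserts, and what Theorem~\ref{main1} makes precise, is that the \emph{Fr\'echet} derivatives of $g_{i\bar j}$, regarded as multilinear maps on $B_{Harm}(R)$ equipped with the $C^{2,\alpha}$ norm (equivalently the sup norm, by Lemma~\ref{norm1}), are uniformly bounded; the $\partial_t$-bounds of Theorem~\ref{main2} then follow by evaluating these multilinear maps on the $\mu_k$ and picking up the factors $Comp(\mu_k)$. Your ingredients do in fact establish this Fr\'echet version, since you have analyticity and uniform boundedness on the fixed sup-norm ball; you simply need to apply Banach-space Cauchy estimates on that ball rather than pass to a $t$-polydisc. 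One further point: the paper carries the harmonic projection step in the Sobolev space $H^1(-2)$ rather than in H\"older spaces, precisely because the $L^2$-unit $\mu_j$ that seed the recursion for the projected frame are not uniformly bounded in $C^{2,\alpha}$ (see the remark after Theorem~\ref{main1}). This mixing of norms---differentiating in the $C^{2,\alpha}$ variable $\mu$ while carrying the projected differentials in $H^1\subset L^2$---is the subtlety you would need to confront to make your Gram--Schmidt step uniform.
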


Weil-Petersson displacement is calibrated by the $L^2$-metric for harmonic Beltrami differentials, while the Bers embedding is calibrated by the supremum norm.  We find that the magnitude of derivatives of the Weil-Petersson metric is given solely by the comparison of $L^2$ and supremum norms.  In \cite[Corollary 11]{Wlbhv}, we found that the comparison of the two norms is given in terms of the projection onto the gradients of the small geodesic-length functions; see Definition \ref{Ratio} and Lemma \ref{norm2} below.  The maximal value for the comparison is $systole^{-1/2}$, for systole the smallest geodesic-length.  The minimal value for the comparison is an absolute constant.  In particular for compact subsets of the moduli space of Riemann surfaces and for tangent directions parallel to the compactification divisor of the moduli space, the $L^2$ and supremum norms are uniformly comparable. 

\begin{coro} A $k^{th}$ derivative of the Weil-Petersson metric tensor is bounded by $systole^{-k/2}$.  The covariant derivatives of Weil-Petersson curvature are uniformly bounded on compact subsets of the moduli space and for directions parallel to the compactification divisor of the moduli space.
\end{coro}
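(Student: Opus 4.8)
The plan is to combine the Theorem with the comparison of the $L^2$ and supremum norms for harmonic Beltrami differentials established in \cite[Corollary 11]{Wlbhv}, recorded below as Definition \ref{Ratio} and Lemma \ref{norm2}. For the first assertion: the Theorem furnishes, for the Bers embedding with an $L^2$-orthonormal initial frame of harmonic Beltrami differentials, bounds on the $k^{th}$-order derivatives of the Weil--Petersson metric tensor at the origin that depend only on $k$; equivalently, since Weil--Petersson displacement is measured in $L^2$ while the Bers chart is calibrated in the supremum norm, the magnitude of a $k^{th}$ derivative of the metric tensor in the Bers chart is governed by $k$ factors of the ratio between these two norms on harmonic Beltrami differentials. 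By Lemma \ref{norm2} (i.e. \cite[Corollary 11]{Wlbhv}) this ratio lies between an absolute constant and $systole^{-1/2}$. Taking the extreme value $systole^{-1/2}$ in each of the $k$ slots gives the bound $systole^{-k/2}$, proving the first statement.

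For the second assertion I would argue in two stages. First, over a compact subset $K$ of the moduli space the systole is bounded below, so by the first part all coordinate derivatives of the metric tensor up to any prescribed order are bounded uniformly on $K$. The iterated covariant derivatives $\nabla^m R$ of the Weil--Petersson curvature tensor are universal expressions in the components $g_{i\bar j}$, the entries of the inverse $g^{i\bar j}$, and the coordinate derivatives of $g_{i\bar j}$ --- the Christoffel symbols are built from $g^{-1}$ and $\partial g$, the curvature is $\partial\Gamma + \Gamma\ast\Gamma$, and each further covariant derivative adds one coordinate derivative and one Christoffel factor. Since the Bers embedding is $\omega$-normal, \cite[Theorem 4.5]{Wlbers}, the pure derivatives of $g_{i\bar j}$ vanish at the origin and the first-order derivatives in particular are zero there, so the only ingredients are $g_{i\bar j}(0)$, its inverse, and the mixed derivatives; all of these are controlled on $K$. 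Hence $\nabla^m R$ is uniformly bounded on $K$. Second, for a tangent direction $v$ parallel to the compactification divisor I would invoke the finer statement of \cite[Corollary 11]{Wlbhv} recorded in Definition \ref{Ratio}: the $L^2$/supremum ratio of $v$ degenerates toward $systole^{-1/2}$ only to the extent that $v$ has a component along the gradients of the short geodesic-length functions --- the pinching directions, which span the directions normal to the compactification divisor. For $v$ parallel to the divisor that component is zero, so the ratio is bounded by an absolute constant, and differentiating the metric tensor, hence $\nabla^m R(v,\dots,v)$, in such directions yields bounds independent of the surface, once the argument is applied in every differentiation slot.

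The main obstacle I anticipate is the anisotropy of the norm comparison. For the first assertion the crude uniform bound $systole^{-1/2}$ per slot is sufficient, but for the curvature statement one cannot apply it indiscriminately: each differentiation direction should be split into its pinching component and its component parallel to the divisor, the parallel part being controlled by absolute constants while the hypothesis kills the pinching part, and the mixed contributions --- pinching in some slots, parallel in others --- must still be estimated, which is where the quantitative form of \cite[Corollary 11]{Wlbhv}, controlling the ratio through the magnitude of the pinching projection, is essential. A secondary and more routine point is verifying that the formula for $\nabla^m R$ introduces no geometry-dependent constants beyond coordinate derivatives of $g_{i\bar j}$; here the $\omega$-normality of the Bers chart, in particular the vanishing of $\Gamma$ at the origin, is what one uses to keep the expressions under control.
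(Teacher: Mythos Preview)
Your approach is correct and coincides with the paper's: the corollary is deduced from the main theorem via the $C^0/L^2$ comparison of Lemma~\ref{norm2}, with the covariant derivatives of curvature handled as universal polynomials in the coordinate derivatives of $g_{i\bar j}$ at the origin (this is exactly the content of Theorem~\ref{main2} and its proof). Your anticipated obstacle about mixed pinching/parallel slots is not actually needed for the corollary as stated, since ``directions parallel to the compactification divisor'' means all arguments are parallel and hence have uniformly bounded $Comp$; the refined per-slot bound you describe is precisely what the paper records in Theorem~\ref{main2}.
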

The expansions for the Weil-Petersson covariant derivative of geodesic-length \cite[Theorem 4.7]{Wlbhv} and curvature \cite[Theorem 15]{Wlcurv} show that the bounds are optimal for the first and second derivatives. 

The main quantities to analyze are the variation of the hyperbolic metric and the variation of the projection onto harmonic Beltrami differentials.  Defining equations for the quantities are given in terms of the Laplacian and the implicit function theorem is applied for each equation. Initial  inversion of the linearized defining equations is given by Green's operators.  For the variation of the hyperbolic metric, inversion in the appropriate Banach space is given by applying the $C^{2,\alpha}$ Schauder estimates. For the projection onto harmonic Beltrami differentials, Sobolev space estimates are applied.  

The present estimates should be generally qualitatively optimal.  Strengthening the uniform boundedness of derivatives of the metric in the Bers embedding would involve explicit constants or applying specific information about tangent directions.  The integrand for the Weil-Petersson metric is the product of projections of harmonic Beltrami differentials and the reciprocal of the hyperbolic metric.  We estimate the projections in the Sobolev space $H^1\subset L^2$ and the hyperbolic metric in $C^{k,\alpha}\subset C^0$.  The total integrand is estimated in a subspace of $L^1$, the optimal space for bounding an integral.  Curvature is given by the second derivative of the metric and the present bound is $systole^{-1}$ in the direction of gradients of small geodesic-lengths and uniform boundedness parallel to the compactification divisor.  These statements are generally optimal by the curvature expansion of \cite[Theorem 15]{Wlcurv}.

Bounds for Weil-Petersson curvature were first derived in the work of Trapani \cite{Trap}.   Detailed analysis of curvature and comparisons to other metrics were presented in the work of Liu-Sun-Yau, including considering the fourth derivative of the metric by considering the negative Ricci form as a metric and computing its Ricci curvature \cite{LSY1}.  Huang was the first to consider the dependence of curvature on the tangent direction \cite{Zh2}.  Expansions for the Riemann curvature tensor were developed in \cite{Wlbhv}.  In the work of Burns-Masur-Wilkinson \cite[Chapter 5]{BMW}, the Bers embedding, comparisons of metrics and McMullen's quasi Fuchsian reciprocity were applied to show that the $k^{th}$ derivative of the Weil-Petersson metric tensor is bounded as $systole^{-k}$. In \cite{BMMW, BMMW1}, the authors use the present estimates for norms in establishing at most polynomial mixing for non-exceptional moduli spaces and exponential mixing for exceptional moduli spaces for the Weil-Petersson geodesic flow.  

It is my pleasure to thank Rafe Mazzeo for his assistance.   

\section{Spaces of differentials, Teichm\"{u}ller space and the Weil-Petersson metric.} 
   
We review several matters, closely following the exposition and notation of \cite{Wlbers}.  The first is the description of symmetric tensors on a surface; the discussion covers Banach spaces of sections, their invariant derivatives, and the harmonic Beltrami differentials.  The second matter is the description of the tangent and cotangent spaces of Teichm\"{u}ller space, as well as the definition of the Weil-Petersson metric.  We also set our conventions for differentiating functions defined on Banach spaces.

The first matter is symmetric tensors on a Riemann surface.  Start with a compact Riemann surface $R$ possibly with punctures, uniformized by the hyperbolic plane; a goal of the present work is to extend the original considerations of \cite{Wlbers} to the case of surfaces with punctures.  Consider the canonical bundle $\kappa$ of the underlying compact surface $\bar R$.  If $\{(U_{\beta},z_{\beta})\}$ is an atlas for $\bar R$ and $\sigma$ a section of $\kappa$ with local expression $\sigma_{\beta}$ on $U_{\beta}$, then $\sigma_{\beta}\,dz_{\beta}/dz_{\gamma}=\sigma_{\gamma}$ on the overlap $U_{\beta}\cap U_{\gamma}$.  Fix a square root $\kappa^{1/2}$ of the canonical bundle of $\bar R$.  For $p,q$ and $r$ integers, let $S(p,q)$ be the space of measurable sections of $\kappa^{p/2}\otimes\bar\kappa^{q/2}$ and $S(r)=S(r,-r)$.  Example spaces are Beltrami differentials, the bounded sections in $S(-2,2)$, holomorphic quadratic differentials with at most simple poles at punctures, sections of $S(4,0)$, and the $R$-hyperbolic metric $\Lambda$, a section of $S(2,2)$.

Most of our attention will be spent on the spaces $S(r)$ since, for $\sigma\in S(2p,2q)$ and $\Lambda$ the hyperbolic metric, $\sigma\otimes\Lambda^{-(p+q)/2}\in S(p-q)$.  A basic feature of $\sigma\in S(r)$ is that its absolute value $|\sigma|$ is a function on $R$. Metric (essentially the covariant) derivatives $K_r:S(r)\rightarrow S(r+1)$ and $L_r:S(r)\rightarrow S(r-1)$ are defined as follows.  For a local coordinate $z$ and $\Lambda = (\lambda(z)|dz|)^2$, then $K_r=\lambda^{r-1}(\partial/\partial z)\lambda ^{-r}$ and $L_r=\lambda^{-r-1}(\partial/\partial \bar z)\lambda^r$.  The operators have a number of formal properties: $K_r=\overline{L_{-r}}$ and $D_r=4L_{r+1}K_r + r(r+1)=4K_{r-1}L_r+r(r-1)$ is the $\bar\partial$-Laplacian on $S(r)$.       

If we choose an atlas $\{(U_{\beta},z_{\beta})\}$ with charts designated at each puncture, then we can define for each non negative integer $k$ and each fractional index $\alpha$, $0<\alpha<1$, a $C^{k,\alpha}$ norm $\|\ \|_{k,\alpha}$ for sections of $S(r)$ in terms of the $K_*$ and $L_*$ derivatives and the H\"{o}lder estimates relative to the charts and distance at most unity.  Denote by $S_{k,\alpha}(r)$ the Banach space of  $C^{k,\alpha}$ $S(r)$-differentials.  We also consider the $C^0$ norm $\|\ \|_0$ and corresponding Banach space for bounded sections of $S(r)$.  We further consider the Banach spaces of $L^1$, $L^2$ and $L^{\infty}$ sections relative to the hyperbolic area element. 

The hyperbolic metric $\Lambda$ in a neighborhood of a puncture is isometric to 
$((|z|\log |z|)^{-1}|dz|)^2$ in a neighborhood of the origin.  A meromorphic quadratic differential in a neighborhood of the origin with poles possible only at the origin is locally in $L^1$, $L^{\infty}$, $C^{k,\alpha}$ or the $k^{th}$ Sobolev space only if the pole has order at most one.  The harmonic Beltrami differentials $B_{Harm}(R)$ are the sections $\sigma\in S(-2)$ with $\sigma=\Lambda^{-1}\otimes \bar q$ for $q$ a holomorphic quadratic differential on $R$ with at most simple poles at punctures.  Since $B_{Harm}(R)$ is finite dimensional, the various norms are  equivalent for a given surface.  In Section \ref{compare}, we consider the dependence of norm comparison on the surface.  In Lemma \ref{norm1}, we show that the $C^{k,\alpha}$ norm is bounded by the supremum norm independent of the surface. The kernel of $K_{-2}$ on smooth $S(-2)$ sections consists of sections with the form $\sigma=\Lambda^{-1}\otimes \bar q$ for $q$ holomorphic on the complement of the punctures.  Consequently the kernel of $K_{-2}$ in $L^1$, $L^{\infty}$, $C^{k,\alpha}$ or the $k^{th}$ Sobolev space is precisely the space $B_{Harm}(R)$.    

We will also be interested in the Hilbert space structure for $S(r)$ given by the Hermitian product $\langle \mu,\nu\rangle=\int_R\mu\bar\nu \Lambda$ for $\Lambda$ the $R$-hyperbolic area element. The orthogonal projection onto the harmonic differentials $P:L^2\cap S(-2)\rightarrow 
B_{Harm}(R)$ will be basic to our considerations.  On the subspace $L^{\infty}$, the projection can be given as an integral operator \cite{Ahsome}.  For our purposes, the description of $P$ simply as the orthogonal projection onto the kernel of $K_{-2}$ is sufficient.  Our main consideration in Section \ref{projector} is the variation of the projection $P$.  

The next matter is to describe the tangent and cotangent space of the Teichm\"{u}ller space $T(R)$ at the point $R$. The complex cotangent space is isomorphic to $Q(R)$, the space of holomorphic quadratic differentials with at most simple poles at punctures.  To describe the complex tangent space, first consider $B(R)=S_{2,\alpha}(-2)$, the space of $C^{2,\alpha}$ Beltrami differentials (this Banach space is required for our considerations) and also consider the pairing $B(R)\times Q(R)\rightarrow \mathC$ given by $(\mu,\varphi)=\int_R\mu \varphi$.  Now for $N(R)=Q(R)^{\perp}$ the quotient $B(R)/N(R)$ is isomorphic to the complex tangent space of $T(R)$ at $R$ and the induced pairing $B(R)/N(R)\times Q(R)\rightarrow\mathC$ is the tangent-cotangent pairing \cite{Ahsome}. 

In fact the kernel of the harmonic projection $P$ on $B(R)$ is precisely $N(R)$, thus $P:B(R)\rightarrow B_{Harm}(R)$ induces an isomorphism $P:B(R)/N(R)\rightarrow B_{Harm}(R)$.  In particular the tangent space of $T(R)$ at $R$ is isomorphic to $B_{Harm}(R)$, the space of harmonic Beltrami differentials vanishing at punctures.

The Weil-Petersson hermitian metric for Teichm\"{u}ller space is simply
\[
\langle\mu,\nu\rangle\,=\,\int_R\mu\bar\nu \Lambda\qquad\mbox{for } \mu,\nu\in B_{Harm}(R).
\]
An important matter for understanding the manifold structure of $T(R)$ is the description of a local coordinate chart and the associated coordinate vector fields.  The basic consideration is to parameterize the local deformations of $R$.  Start with an atlas for $R$, $\{(U_{\beta},z_{\beta})\}$, $z_{\beta}:U_{\beta}\rightarrow\mathC$ and a Beltrami differential $\mu$, with supremum norm $\|\mu\|_0<1$.  Let $\mu_{\beta}$ be the local representation of $\mu$ on $z_{\beta}(U_{\beta})$ and $f_{\beta}$ a homeomorphism solution of $f_{\beta,\bar z}=\mu_{\beta}f_{\beta,z}$ on $z_{\beta}(U_{\beta})$, then $\{(U_{\beta},f_{\beta}\circ z_{\beta})\}$ is an atlas for a new surface $R^{\mu}$.  Now to parameterize a neighborhood of $R$ in $T(R)$: choose $\mu_1,\dots,\mu_n$ giving a basis for $B(R)/N(R)$, then for $t=(t_1,\dots,t_n)$  small $\mu=\mu(t)=\sum_{j=1}^nt_j\mu_j$ satisfies $\|\mu(t)\|_0<1$ and $R^{\mu}=R^{\mu(t)}$ is a deformation of $R$; $t$ is a holomorphic coordinate for a neighborhood of $R$. 

\begin{lemma}\textup{\cite{Ahsome}}\label{Lmu} Notation as above.  The coordinate vector field $\partial/\partial t_j$ is given at the point $t$ by the Beltrami differential $L^{\mu(t)}\mu_j=L^{\mu}\mu_j\in B(R^{\mu})$ where
\[
L^{\mu}\mu_j\,=\,\bigg(\frac{\mu_j}{1-|\mu|^2}\frac{f_{\beta,z}}{\overline{f_{\beta,z}}}\bigg)\circ(f_{\beta})^{-1} \quad\mbox{on }U_{\beta}.
\]
\end{lemma}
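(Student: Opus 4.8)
The plan is to verify the formula for the coordinate vector field $\partial/\partial t_j$ directly from the definition of the atlas for $R^{\mu(t)}$ and the chain rule. First I would recall that the deformed surface $R^{\mu}$ carries the atlas $\{(U_\beta, f_\beta\circ z_\beta)\}$, where $f_\beta$ solves the Beltrami equation $f_{\beta,\bar z} = \mu_\beta f_{\beta,z}$, and that a tangent vector to Teichm\"{u}ller space at $R^{\mu}$ is represented by a Beltrami differential on $R^{\mu}$, i.e. a section of $S(-2)$ with respect to the complex structure on $R^{\mu}$. The task is to compute, for the one-parameter family $\mu(t) + s\mu_j$ with $s$ near $0$, the infinitesimal Beltrami differential on $R^{\mu(t)}$ that this variation induces, and to check that it equals $L^{\mu}\mu_j$ as displayed.

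Next I would make the computation explicit in a single chart. Write $w_\beta = f_\beta\circ z_\beta$ for the local coordinate on $R^{\mu}$. A nearby complex structure corresponding to $\mu(t)+s\mu_j$ is obtained by solving the Beltrami equation with coefficient $\mu_\beta + s(\mu_j)_\beta$; differentiating in $s$ at $s=0$ and transporting the result through the change of variables $z_\beta \mapsto w_\beta$ gives the Beltrami coefficient on $R^{\mu}$. The key identity is the transformation law for Beltrami differentials under the quasiconformal map $f_\beta$: a Beltrami coefficient $\nu$ in the $z$-coordinate corresponds, in the $w = f_\beta(z)$ coordinate, to $\big(\tfrac{\nu - \mu_\beta}{1 - \bar\mu_\beta \nu}\cdot \tfrac{f_{\beta,z}}{\overline{f_{\beta,z}}}\big)\circ f_\beta^{-1}$; linearizing this at $\nu = \mu_\beta$ in the direction $(\mu_j)_\beta$ produces the factor $\tfrac{1}{1-|\mu_\beta|^2}$ and the conformal-factor ratio $\tfrac{f_{\beta,z}}{\overline{f_{\beta,z}}}$, which is exactly the claimed expression. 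I would then note the cocycle compatibility on overlaps $U_\beta \cap U_\gamma$: since the $(\mu_j)$ and $\mu$ are globally defined sections of $S(-2)$ and the $f_\beta$ differ by conformal transition maps, the local pieces patch to a well-defined element $L^{\mu}\mu_j$ of $B(R^{\mu})$; the identification $L^{\mu(t)}\mu_j = L^{\mu}\mu_j$ is just the statement that the deformation is recorded on the fixed topological surface.

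I expect the main obstacle to be bookkeeping rather than anything conceptual: one must be careful about which complex structure (that of $R$ or that of $R^{\mu}$) each object lives on, and about the distinction between the coefficient $\mu_\beta$ appearing in the Beltrami equation and its transported value, so that the linearization is performed in the correct coordinate and the factors $f_{\beta,z}$, $\overline{f_{\beta,z}}$, and $(1-|\mu|^2)^{-1}$ all appear with the right placement. Once the transformation law for Beltrami coefficients under composition of quasiconformal maps is written down correctly, differentiating it is a short computation. I would remark at the end that this is Ahlfors' formula and refer to \cite{Ahsome} for the original derivation, so the proof here amounts to recording the linearization for completeness and fixing notation for the later sections.
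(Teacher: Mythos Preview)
Your approach is correct and is the standard derivation: write the composition law for Beltrami coefficients under $f^{\mu+\epsilon\mu_j}=g\circ f^{\mu}$, namely that $g$ has coefficient $\big(\tfrac{\nu-\mu}{1-\bar\mu\nu}\,\tfrac{f_{\beta,z}}{\overline{f_{\beta,z}}}\big)\circ f_\beta^{-1}$ when $\nu=\mu+\epsilon\mu_j$, and differentiate at $\epsilon=0$ to obtain the displayed $L^{\mu}\mu_j$. The cocycle check on overlaps is routine, as you say.

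As for comparison with the paper: there is no proof in the paper to compare against. The lemma is simply quoted from Ahlfors \cite{Ahsome}, and the only commentary is the commutative diagram of quasiconformal maps
\[
\xymatrix{ R^{\mu} \ar[r]^{f^{\epsilon L^{\mu}\mu_j}} & R^{\mu +\epsilon \mu_j} \\ R \ar[u]^{f^{\mu}} \ar[ur]_{f^{\mu+\epsilon \mu_j}} & }
\]
which summarizes exactly the factorization you are linearizing. So your write-up would supply a proof where the paper chose only to cite one; your closing remark pointing to \cite{Ahsome} is appropriate.
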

The result of the lemma is summarized in the following diagram of quasiconformal maps and Beltrami coefficients.
\[
\xymatrixrowsep{4pc}\xymatrixcolsep{4pc}
\xymatrix{ R^{\mu} \ar[r]^{f^{\epsilon L^{\mu}\mu_j}} & R^{\mu +\epsilon \mu_j} \\ 
R \ar[u]^{f^{\mu}} \ar[ur]_{f^{\mu+\epsilon \mu_j}} & }
\]
Thus for $\mu=\mu(t)$, $P^{\mu}$ the $R^{\mu}$ harmonic Beltrami projector and $\Lambda^{\mu}$ the $R^{\mu}$ hyperbolic area element, the Weil-Petersson metric tensor is locally given as 
\begin{equation}\label{wpdefn}
g_{i\bar j}(t)\,=\,\langle P\mmu L\mmu \mu_i,P\mmu L\mmu \mu_j\rangle\mmu\,=\,\int_{R\mmu}P\mmu L\mmu\mu_i\,\overline{P\mmu L\mmu\mu_j}\,\Lambda\mmu,\quad \cite{Ahsome}.
\end{equation}

\section{The variation of the hyperbolic area element.}

In the work of Ahlfors and Bers \cite{AB}, the solution of the Beltrami equation is given in terms of the special potential theory operators
\[
Pg(z)\,=\,\frac{1}{2\pi i}\int g(\zeta)\big(\frac{1}{\zeta-z}-\frac{1}{\zeta}\big)d\zeta d\bar \zeta \ \mbox{ and }\ Tg(z)\,=\,\frac{1}{2\pi i}\int\frac{g(\zeta)-g(z)}{(\zeta-z)^2}d\zeta d\bar\zeta,
\]
with $(Pg)_{\bar z}=g$ and $(Pg)_z=Tg$. The operators are bounded in terms of $L^p$ norms, $p>2$.  We consider an alternative approach to the Beltrami equation through the prescribed curvature equation.  The bounds on solutions are given in terms of $C^{k,\alpha}$ and $L^2$ norms. The bounds are obtained by combining the interior Schauder estimates, estimates for the Green's operator $(D-2)^{-1}$ and the implicit function theorem.  The resulting bounds are independent of the Riemann surface.  

We take up the approach and notation of \cite[Chapter 5]{Wlhyp} and \cite[Section 3]{Wlbers} for the variation of the hyperbolic metric.  Let $R$ be a compact Riemann surface possibly with punctures with local coordinate $z=x+iy$, hyperbolic area element $\Lambda=\lambda(z)^2dxdy$ and $\mu$ a Beltrami differential with $\|\mu\|_0<1$.  Let $f^{\mu}$ be the quasiconformal homeomorphism with Beltrami differential $\mu$, and $f^{\mu}:R\rightarrow R^{\mu}$.  The $f^{\mu}$ pullback of the $R^{\mu}$ hyperbolic element $\Lambda^{\mu}$ is given simply as
\[
(f^{\mu})^*\Lambda^{\mu}\,=\,e^{2h}\Lambda, \quad \mbox{where}\quad D_*h\,-\,C_*\,=\,e^{2h},
\]
where $D$ and $C$ are the Laplacian and curvature of the $J$-pushforward of $\Lambda$ on $R^{\mu}$ and $D_*$ is the pullback of $D$ and $C_*=C\circ f^{\mu}$, with  
\[
 D_*\,=\,4\lambda^{-2}\big((1+|\mu|^2)A\frac{\partial^2}{\partial z\partial\bar z}\,+\,2\Re\big((\overline{\partial(\mu)}A)\frac{\partial}{\partial z}\,-\,A\frac{\partial}{\partial z}(\mu\frac{\partial}{\partial z})\big)\big),
\]
and
\[
C_*\,=\,-\frac12D_*\log(\lambda^2A)\,+\,4\lambda^{-2}\Re\big(-\overline{\partial(\mu)}
(\bar\mu\mu_z A)\,+\,\bar\mu(\mu_z)^2A\,+\,\mu_{zz}\big)
\]
\[
\mbox{for}\quad A\,=\,(1-|\mu|^2)^{-1},\quad 
\partial(\mu)\,=\,\frac{\partial}{\partial z}\,-\,\bar\mu\frac{\partial}{\partial \bar z}\quad\mbox{and}\quad
\overline{\partial(\mu)}\,=\,\frac{\partial}{\partial \bar z}\,-\,\mu\frac{\partial}{\partial  z}.
\]
The operator $D_*$ is the pullback Laplacian on functions; $C_*$ is the curvature of the pullback metric and $h$ is the solution of the prescribed curvature $-1$ equation.  Harmonic Beltrami differentials vanish at cusps and consequently for $\mu$ harmonic the curvature $C_*$ limits to $-1$ at punctures.  For $\mu$ also small, the curvature $C_*$ is strictly negative and in this case the prescribed curvature equation $D_*h\,-\,C_*\,=\,e^{2h}$ has a unique complete metric solution. 

\begin{lemma}\textup{\cite[Lemma 3.1]{Wlbers}}\label{metricvar} The solution $h\in S_{2,\alpha}(0)$ of $D_*h\,-\,C_*\,=\,e^{2h}$ is a real analytic function of $\mu\in S_{2,\alpha}(-2)$ for $\mu$ small.  Each derivative of $h$ with respect to $\mu$ at the origin is uniformly bounded in terms of $C^{2,\alpha}$ norms independent of the surface.
\end{lemma}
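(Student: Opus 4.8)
The plan is to apply the analytic implicit function theorem to the prescribed curvature equation $D_*h - C_* = e^{2h}$, viewed as an equation $F(\mu,h) = 0$ with $F: S_{2,\alpha}(-2) \times S_{2,\alpha}(0) \to S_{0,\alpha}(0)$. First I would verify that $F$ is well-defined and real analytic in a neighborhood of $(\mu,h) = (0,0)$: the coefficients of $D_*$ and the inhomogeneity $C_*$ are built from $\mu$ and its first two derivatives through rational expressions in $A = (1-|\mu|^2)^{-1}$ together with $\log(\lambda^2 A)$ and the exponential $e^{2h}$, all of which are real analytic functions of their arguments on the relevant domain; the one subtlety is the behavior at the punctures, where $\lambda$ blows up, but since harmonic (and $C^{2,\alpha}$) Beltrami differentials vanish at the cusps and $C_*$ limits to $-1$ there, the map lands in the correct Banach space of bounded H\"older sections. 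At $(\mu,h) = (0,0)$ one has $D_* = D$, the genuine hyperbolic Laplacian on $S(0)$, $C_* = -1$, and $h = 0$ solves the equation since $D\cdot 0 - (-1) = 1 = e^0$.

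The key step is the invertibility of the partial derivative $D_h F(0,0)$. Differentiating in $h$ at $h=0$ gives the linear operator $\dot h \mapsto D\dot h - 2\dot h = (D-2)\dot h$, the shifted Laplacian appearing throughout the excerpt as $D-2$. On a complete hyperbolic surface the operator $D$ is nonnegative and self-adjoint, so $D - 2$ is invertible with bounded inverse; moreover $(D-2)^{-1}: S_{0,\alpha}(0) \to S_{2,\alpha}(0)$ is bounded by combining the interior Schauder estimates (to gain two H\"older derivatives) with the $L^2$ bound on the Green's operator $(D-2)^{-1}$ coming from the spectral gap — and crucially the Schauder constants depend only on the fixed geometry near the charts (including the model cusp charts), not on the global surface, so the bound for $(D-2)^{-1}$ is uniform across topological type. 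This is exactly the estimate packaged in the macro $\Dtwo$ and alluded to in the paragraph preceding Lemma \ref{metricvar}. With $D_h F(0,0) = D - 2$ boundedly invertible, the analytic implicit function theorem produces a unique real analytic solution branch $\mu \mapsto h(\mu)$ for $\mu$ small, establishing the first assertion.

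For the uniform boundedness of the derivatives, I would differentiate the relation $F(\mu, h(\mu)) = 0$ repeatedly and solve for the derivatives of $h$ by the standard recursion: the $k$-th derivative $D^k_\mu h(0)$ equals $-(D-2)^{-1}$ applied to a universal polynomial expression in the lower-order derivatives $D^j_\mu h(0)$ ($j < k$) and in $\mu$ and its first two spatial derivatives, evaluated at $\mu = 0$. The coefficients of this polynomial are the Taylor coefficients of the real analytic map $F$, and at $\mu = 0$ they are built from the fixed background quantities $\lambda$, $\log\lambda^2$ (and their derivatives via $K_*,L_*$), which are controlled in $C^{k,\alpha}$ independently of the surface by the chart conventions. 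Since $\|(D-2)^{-1}\|$ is uniform, an induction on $k$ gives the desired surface-independent $C^{2,\alpha}$ bounds on all derivatives of $h$ at the origin.

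**The main obstacle** is the uniformity of the Schauder/Green's-operator estimate for $(D-2)^{-1}$ in the presence of cusps: one must check that the H\"older norms are measured in a fixed atlas with standardized cusp charts (so the local geometry is genuinely uniform), that the meromorphic quadratic differentials entering the construction have at most simple poles so that everything lies in $L^1 \cap L^\infty \cap C^{k,\alpha}$, and that the spectral gap $\lambda_0(D) \geq 0 > -2$ — equivalently the uniform $L^2$-bound $\|(D-2)^{-1}\|_{L^2 \to L^2} \leq 1/2$ — holds for every complete hyperbolic surface regardless of topological type. Once this uniform inversion estimate is in hand, the rest is the routine bookkeeping of the implicit function theorem recursion, and one should cite \cite[Lemma 3.1]{Wlbers} and \cite[Chapter 5]{Wlhyp} for the details already carried out there.
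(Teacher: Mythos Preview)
Your overall strategy matches the paper's exactly: set up $F(\mu,h)=D_*h-C_*-e^{2h}$, note $F(0,0)=0$, compute $D_hF(0,0)=D-2$, show this is boundedly invertible $S_{0,\alpha}(0)\to S_{2,\alpha}(0)$ with a surface-independent bound, apply the analytic implicit function theorem, and then bound derivatives by the recursion $(D-2)h^{(r)}=\tau$ with $\tau$ built from lower-order data. The paper also complexifies, replacing $\bar\mu$ by an independent variable $\rho$ to pass from real to complex analyticity, which you do not mention but is a routine addition.

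Where you diverge from the paper is in the proof of the key technical step, the uniform bounded invertibility of $(D-2)$ on H\"older spaces. You propose to use the $L^2$ spectral bound $\|(D-2)^{-1}\|_{L^2\to L^2}\le 1/2$ and then Schauder. The paper does \emph{not} go through $L^2$ here: it integrates against the explicit (non-automorphic) Green's function $G$ for $(D-2)$ on the universal cover $\mathbb H$, which has a logarithmic pole and is $L^1(\mathbb H,\Lambda)$, to produce a particular solution $f$ with the pointwise bound $|f|\le \|(D-2)h\|_0\int_{\mathbb H}|G|\,\Lambda$; then $f-h$ is a bounded solution of $(D-2)(f-h)=0$, forced to vanish by the maximum principle with the barrier $\Im z$ at the cusps. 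This yields $\|h\|_0\le C\|(D-2)h\|_0$ with a universal constant, and only then is the interior Schauder estimate $\|h\|_{2,\alpha}\le C(\|h\|_0+\|(D-2)h\|_{0,\alpha})$ invoked. Your route is not wrong in principle, but as written it has a small gap: the standard Schauder estimate you quote needs a $C^0$ bound on $h$, not an $L^2$ bound, and on a noncompact finite-area surface $L^2$ does not embed in $L^\infty$. You would need either a local Schauder estimate with $\|h\|_{L^2(B_1)}$ on the right (valid, but you should say so and check uniformity over the cusp charts), or the paper's Green's-function argument, which sidesteps the issue entirely. Also note a sign slip: with the paper's convention $D_0=4\lambda^{-2}\partial_z\partial_{\bar z}$ is \emph{nonpositive}, not nonnegative, which is what actually makes $D-2$ uniformly invertible.
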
  
\begin{proof} The argument is an application of the analytic implicit function theorem \cite[Section 3.3]{Berg}. Consider first the map of Banach spaces $F:S_{2,\alpha}(-2)\times S_{2,\alpha}(-2)\times S_{2,\alpha}(0)\rightarrow S_{0,\alpha}(0)$ given by 
$F(\mu,\bar\mu,h)=D_*h-C_*-e^{2h}$.  The above formulas provide that the map is real analytic.  To obtain a complex analytic map, complexify by replacing $\bar\mu$ with an independent variable $\rho\in S_{2,\alpha}(2)$.  Now $F:S_{2,\alpha}(-2)\times S_{2,\alpha}(2)\times S_{2,\alpha}(0)\rightarrow S_{0,\alpha}(0)$ given by $F(\mu,\rho,h)$ is complex analytic.

We are interested in $F$ in a neighborhood of the origin; at the origin $D_*$ is the $R$-hyperbolic Laplacian on functions, the curvature is $C_*=-1$ and $F(0,0,0)=0$.  The differential of $F$ at $(0,0,0)$ in the variable $h$ is the linear map $F_h:S_{2,\alpha}(0)\rightarrow S_{0,\alpha}(0),\,h\rightarrow (D-2)h$.  We now use a standard argument to show that the map $F_h$ is invertible.

A particular solution $f$ of $(D-2)f=(D-2)h$ for $h\in S_{2,\alpha}(0)$ is given by integrating $(D-2)h$ against an appropriate Green's function on the universal cover $\mathbb H$ \cite[pg. 155, the first example]{Fay}.  The (non automorphic) Green's function $G$ has a logarithmic pole and is integrable on the universal cover.  In particular $f=\int_{\mathH}G(D-2)h\,\Lambda$ is a $C^2$ solution of $(D-2)f=(D-2)h$ and $|f|\le\|(D-2)h\|_0\, \int_{\mathH}|G|\Lambda$.  By the standard argument the function $f$ is invariant under the uniformization group of $R$.  It now follows that $(f-h)$ is a bounded function in the kernel of $(D-2)$.  It follows either by the maximum principle for $R$ with the barrier function $\Im z$ for the puncture $\mathH/\langle z\mapsto z+1\rangle$ or by the analysis of \cite[I, Lemma 4.5]{Wlspeclim} that $(f-h)=0$.  In particular $|h|\le C\|(D-2)h\|_0$ with a universal constant.

By the interior Schauder estimate $\|h\|_{2,\alpha}\le C (\|h\|_0\,+\,\|(D-2)h\|_{0,\alpha})$ \cite{GT} and combining with the above estimate yields $\|h\|_{2,\alpha}\le C \|(D-2)h\|_{0,\alpha}$.  The operator $F_h$ is invertible and the equation $D_*h-C_*=e^{2h}$ has a solution for $\mu$ small.

The function $h$ is defined by a fixed equation with uniformly bounded quantities.  The derivatives of $C_*$ and $D_*$ are uniformly $C^{0,\alpha}$ bounded as claimed.  We proceed by induction on the derivative order of $h$.  The derivative equation is 
$(D-2)h^{(r)}\,=\,\tau$, where $\tau$ is a combination of derivative terms of order less than $r$ in $h$ and a derivative of $C_*$.  We apply the induction hypothesis to find that $\tau$ is uniformly bounded in $C^{0,\alpha}$.  By the above Green's function argument, we obtain a $C^0$ bound for $h^{(r)}$ and by the interior Schauder estimate we obtain the desired $C^{2,\alpha}$ bound on $h^{(r)}$.
\end{proof}

\section{Variation of the harmonic Beltrami projector.}\label{projector}

Our considerations also involve the potential theory for the Hilbert and Sobolev spaces of sections.  In particular for $\mu,\nu\in S(r)$, the Hermitian pairing
\[
\int_R\mu\overline{\nu}\Lambda
\]
provides the subspace $L^2(r)\subset S(r)$ of square-integrable sections with a Hilbert space structure.  The standard integration by parts formulas
\[
\langle K_r\mu,\nu\rangle\,=\,-\langle\mu,L_{r+1}\nu\rangle\quad\mbox{and}\quad\langle L_r\mu,\eta\rangle\,=\,-\langle\mu,K_{r-1}\eta\rangle
\]
for smooth $\mu\in S(r), \nu\in S(r+1)$ and $\eta\in S(r-1)$ with $\mu\overline{\nu}$ and $\mu\overline{\eta}$ having compact support, provide for defining the weak $K_*$ and $L_*$ derivatives. For example, $\mu\in S(r)$ has weak $K_r$ derivative $\psi\in S(r+1)$ provided for all smooth sections $\phi\in S(r+1)$ with compact support
\[
\langle\mu,L_{r+1}\phi\rangle\,=\,-\langle\psi,\phi\rangle.
\]
Given the context, we write $K_r\mu=\psi$.  Higher order weak derivatives are defined by compositions of differentiations with the total  order being the count of operators. 

\begin{definition} For $k$ a positive integer, the Sobolev space $H^k(r)\subset S(r)$ consists of sections with $L^2$ weak $K_*$ and $L_*$ derivatives up to total order $k$.
\end{definition} 
The $L^2(r)$ products provide a Hilbert space structure for $H^k(r)$.  For example, $\mu\in H^2(-1)$ has norm squared
\begin{multline*}
\langle\mu,\mu\rangle+\langle K_{-1}\mu,K_{-1}\mu\rangle+\langle L_{-1}\mu,L_{-1}\mu\rangle +\\
\langle K_0K_{-1}\mu,K_0K_{-1}\mu\rangle+\langle L_0K_{-1}\mu,L_0K_{-1}\mu\rangle+
\langle L_{-2}L_{-1}\mu,L_{-2}L_{-1}\mu\rangle
\end{multline*} 
(noting the commutation rule $L_0K_{-1}=K_{-2}L_{-1}+\frac12$).  

A completely general property is that smooth sections with compact support are dense in the Sobolev spaces. Our considerations for the variation of the harmonic Beltrami projector $P$ will involve uniform bounds for the Green's operator $(D_{-1}-2)^{-1}$.    
 
\begin{lemma} For $r=-1,0,1$, the operator $(D_r-2)$ has an inverse that is an isomorphism 
from $L^2(r)$ to $H^2(r)$.  The inverse is bounded independent of the surface. 
\end{lemma}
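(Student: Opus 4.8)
The plan is to treat $(D_r-2)$ exactly as $F_h=(D-2)$ was treated in the proof of Lemma \ref{metricvar}, but now in the Hilbert-space rather than the H\"{o}lder setting, so that the relevant a priori estimate is the elliptic $L^2$-estimate $\|\mu\|_{H^2}\le C(\|\mu\|_{L^2}+\|(D_r-2)\mu\|_{L^2})$ in place of the Schauder estimate. First I would record that for $r=-1,0,1$ the operator $D_r-2=4L_{r+1}K_r+r(r+1)-2$ is a second-order, uniformly elliptic operator on $S(r)$ whose leading part is the hyperbolic Laplacian, with coefficients that are bounded uniformly (in suitable local charts, using the standard description of $\Lambda$ near a cusp); this gives interior $L^2$-elliptic regularity with a constant depending only on the ellipticity constants and not on the surface. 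Combined with density of compactly supported smooth sections in $H^2(r)$, this shows $D_r-2$ maps $H^2(r)$ boundedly into $L^2(r)$ and that any $L^2$ solution $\mu$ of $(D_r-2)\mu=g$ with $g\in L^2(r)$ in fact lies in $H^2(r)$ with $\|\mu\|_{H^2}\le C(\|\mu\|_{L^2}+\|g\|_{L^2})$.

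The crux is then injectivity together with the zeroth-order estimate $\|\mu\|_{L^2}\le C\|(D_r-2)\mu\|_{L^2}$, from which surjectivity and the uniform bound on the inverse follow by the usual functional-analytic argument (closed range plus density of the range). For injectivity and the lower bound I would integrate by parts: for smooth compactly supported $\mu$,
\[
\langle (D_r-2)\mu,\mu\rangle\,=\,-4\langle K_r\mu,K_r\mu\rangle\,+\,(r(r+1)-2)\langle\mu,\mu\rangle\,=\,-4\|K_r\mu\|^2\,+\,(r(r+1)-2)\|\mu\|^2,
\]
using the integration-by-parts formula $\langle L_{r+1}K_r\mu,\mu\rangle=-\langle K_r\mu,K_r\mu\rangle$. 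Since $r(r+1)-2\le -2$ for $r=-1,0,1$ (the values are $-2,-2,0$; for $r=1$ one instead uses the companion identity $D_1=4K_0L_1+0$, giving $-4\|L_1\mu\|^2$), the Cauchy–Schwarz estimate $|\langle(D_r-2)\mu,\mu\rangle|\le\|(D_r-2)\mu\|\,\|\mu\|$ yields $2\|\mu\|^2\le\|(D_r-2)\mu\|\,\|\mu\|$, hence $\|\mu\|\le\tfrac12\|(D_r-2)\mu\|$; density in $H^2(r)$ extends this to all of $H^2(r)$. This simultaneously shows the kernel is trivial and that the operator is bounded below, so $D_r-2:H^2(r)\to L^2(r)$ is an injection with closed range, and with uniform constants since the identity above involves no surface-dependent quantities.

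For surjectivity I would exhibit a right inverse via the Green's operator on the universal cover, paralleling the $(D-2)^{-1}$ argument in Lemma \ref{metricvar}: given $g\in L^2(r)$ (first smooth of compact support, then by density), convolve with the $\mathH$-Green's function $G_r$ for $D_r-2$ to produce a solution $\mu$ on $\mathH$, check it is equivariant under the uniformization group and hence descends to $R$, and estimate its $L^2$ norm using the above lower bound; then invoke the interior $L^2$-estimate to land in $H^2(r)$. The main obstacle I anticipate is the cusp behavior: one must make sure the $H^2(r)$ theory is the right one near punctures — that compactly supported smooth sections are genuinely dense, that the integration by parts has no boundary contribution, and that the Green's operator produces a section in $H^2(r)$ (equivalently, that the $L^2$ and weak-derivative integrals converge) rather than one with a borderline singularity at the cusp. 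This is exactly the point where the remark in the excerpt about meromorphic differentials lying in $L^p$ or Sobolev spaces only when the pole order is at most one is doing work; I would check that the operators $K_*, L_*$ on $S(r)$ for these low values of $r$ do not manufacture such borderline poles, so that the functional-analytic machinery applies verbatim and the resulting constants remain surface-independent.
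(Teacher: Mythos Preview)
Your lower-bound argument via $\langle(D_r-2)\mu,\mu\rangle$ and Cauchy--Schwarz is essentially the paper's: the paper expands $\|(D_{-1}-2)\phi\|^2$ directly and observes the cross term $-16\Re\langle L_0K_{-1}\phi,\phi\rangle=16\|K_{-1}\phi\|^2\ge 0$, reaching the same inequality $2\|\phi\|\le\|(D_{-1}-2)\phi\|$. So on injectivity and the uniform lower bound you and the paper coincide.

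The genuine difference is in how the $H^2$-regularity (the replacement for the Schauder estimate) is obtained. You invoke interior $L^2$-elliptic regularity and then worry---correctly---about whether the constants survive the cusp charts. The paper never touches local elliptic theory: it shows by pure integration by parts and the commutation rules that $\|\phi\|^2+\|D_{-1}\phi\|^2$ is uniformly equivalent to the full $H^2(-1)$ norm (bounding $\|K_{-1}\phi\|$, $\|L_{-1}\phi\|$, $\|K_0K_{-1}\phi\|$, $\|L_{-2}L_{-1}\phi\|$ one by one). This is exactly the step that dissolves your anticipated obstacle: there are no local charts, no coefficient bounds near the puncture, and the constants are manifestly absolute. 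Your route is not wrong, but the soft spot you flagged is real and you would have to work to close it; the paper's route simply avoids it.

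For surjectivity you propose to build the Green's function on $\mathbb H$ and descend, paralleling the scalar case in Lemma \ref{metricvar}. The paper instead quotes Fay's resolvent construction directly on the surface for $(D_r-2)$, $r=-1,0,1$ (the restriction on $r$ is exactly the resolvent-pole condition there), obtaining a bounded $G:L^2(-1)\to L^2(-1)$ with $(D_{-1}-2)\circ G=Id$; then the norm equivalence from the previous paragraph immediately upgrades the target to $H^2(-1)$. Your universal-cover construction for line-bundle sections is workable but fussier (equivariance for sections, $L^2$ rather than $L^\infty$ control of the kernel); citing Fay is the cleaner path and is what the paper does.
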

\begin{proof}  We consider the case $r=-1$; the three cases are similar.  The first step is to show that the norm $\langle f,f\rangle+\langle D_{-1}f,D_{-1}f\rangle$ is uniformly equivalent to the standard norm on $H^2(-1)$.   The issue is to bound the derivatives of order at most two in terms of only the norm of the section and its Laplacian.  Begin with considering smooth sections with compact support and the formulas
\[
\langle L_{-1}\phi,L_{-1}\phi\rangle\,=\,-\langle K_{-2}L_{-1}\phi,\phi\rangle\,=\,-\langle (D_{-1}-2)\phi,\phi\rangle
\]
and
\[
\langle K_{-1}\phi,K_{-1}\phi\rangle\,=\,-\langle L_0K_{-1}\phi,\phi\rangle\,=\,-\langle D_{-1}\phi,\phi\rangle.
\]
It follows that the norms of $L_{-1}\phi$ and $K_{-1}\phi$ are uniformly bounded by the norms of $\phi$ and $D_{-1}\phi$.  Next by further integrations by parts and the commutation rule $K_rL_{r+1}=L_{r+2}K_{r+1}+\frac{r+1}{2}$, we have the formulas
\begin{multline*}
\langle K_0K_{-1}\phi,K_0K_{-1}\phi\rangle\,=\,-\langle L_1K_0K_{-1}\phi,K_{-1}\phi\rangle\,=\\
\,-\langle K_{-1}L_0K_{-1}\phi,\phi\rangle\,=\,\langle L_0K_{-1}\phi,L_0K_{-1}\phi\rangle
\end{multline*}
and
\begin{multline*}
\langle L_{-2}L_{-1}\phi,L_{-2}L_{-1}\phi\rangle\,=\,-\langle K_{-3}L_{-2}L_{-1}\phi,L_{-1}\phi\rangle\,=\\
\langle (L_{-1}K_{-2}-1)L_{-1}\phi,L_{-1}\phi\rangle\,=\,\langle L_{-1}\phi,L_{-1}\phi\rangle+\langle K_{-2}L_{-1}\phi,K_{-2}L_{-1}\phi \rangle.
\end{multline*}
It follows that the norms of $K_0K_{-1}\phi$ and $L_{-2}L_{-1}\phi$ are uniformly bounded by the norms of $\phi$ and $(D_{-1}-2)\phi$.   We have that the norm squared $\langle \phi,\phi\rangle+\langle D_{-1}\phi,D_{-1}\phi\rangle$ is uniformly comparable to the standard norm squared on $H^2(-1)$. 

The Green's operator for $(D_{-1}-2)$ is the unique bounded operator $G:L^2(-1)\rightarrow L^2(-1)$ satisfying $(D_{-1}-2)\circ G=Id$ on $L^2(-1)$.   Fay gives the construction of the operator in his first example of \cite[pg. 155]{Fay}.  The restriction $r=-1,0,1$ comes from the consideration of the poles of the resolvent.  The relation $(D_{-1}-2)\circ G=Id$ provides that $G$ is a bounded operator to the $L^2(-1)$ subspace with norm $\langle f,f\rangle+\langle D_{-1}f,D_{-1}f\rangle$. The norm is comparable to the Sobolev norm; $G:L^2(-1)\rightarrow H^2(-1)$ is a bounded operator.  It only remains to show that $G$ is an isomorphism.   It is equivalent to show that the bounded operator $(D_{-1}-2):H^2(-1)\rightarrow L^2(-1)$ is an isomorphism.  For $\phi$ a smooth section with compact support, since $D_{-1}-2=4L_0K_{-1}-2$ then  
\[
\langle (D_{-1}-2)\phi,(D_{-1}-2)\phi\rangle\,=\,4\langle \phi,\phi \rangle -16\Re \langle L_0K_{-1}\phi,\phi\rangle+16\langle L_0K_{-1}\phi,L_0K_{-1}\phi\rangle
\]
and since $-\langle L_0K_{-1}\phi,\phi\rangle=\langle K_{-1}\phi,K_{-1}\phi\rangle$ then
\[
4\langle \phi,\phi\rangle \le \langle (D_{-1}-2)\phi,(D_{-1}-2)\phi\rangle
\]
and consequently $(D_{-1}-2)$ is an isomorphism.  The inequality and the relation $(D_{-1}-2)\circ G=Id$ combine to provide that $(D_{-1}-2)^{-1}$ is uniformly bounded from $L^2(-1)$ to $H^2(-1)$.
\end{proof} 

An application is a uniform bound for the solution of the potential equation $K_{-2}f=g$.
\begin{lemma}\label{K2solve}
The equation $K_{-2}f=g$ has a unique solution $f\in H^1(-2)$ with $f\perp B_{Harm}(R)$ and norm  uniformly bounded by the norm of $g\in L^2(-1)$.  
\end{lemma}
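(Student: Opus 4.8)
The plan is to reduce the first-order equation $K_{-2}f=g$ to the second-order equation $(D_{-1}-2)u=g$ already inverted in the preceding lemma, exploiting the factorization $D_{-1}-2=4K_{-2}L_{-1}$ (the identity $D_r=4K_{r-1}L_r+r(r-1)$ evaluated at $r=-1$). So, given $g\in L^2(-1)$, I would first set $u=(D_{-1}-2)^{-1}g$. By the preceding lemma $u\in H^2(-1)$ is well defined and $\|u\|_{H^2(-1)}\le C\|g\|_{L^2(-1)}$ with $C$ independent of the surface.

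Next I would take $f:=4L_{-1}u$. The operator $L_{-1}$ maps $H^2(-1)$ boundedly into $H^1(-2)$: the weak derivatives of $L_{-1}u$ entering the $H^1(-2)$ norm are $L_{-1}u$, $L_{-2}L_{-1}u$, and $K_{-2}L_{-1}u=L_0K_{-1}u-\tfrac12 u$ (the last via the commutation rule $L_0K_{-1}=K_{-2}L_{-1}+\tfrac12$), and each of these is controlled by $\|u\|_{H^2(-1)}$. Hence $f\in H^1(-2)$ with $\|f\|_{H^1(-2)}\le C\|g\|_{L^2(-1)}$, and by construction $K_{-2}f=4K_{-2}L_{-1}u=(D_{-1}-2)u=g$. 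Orthogonality is then automatic: for $\phi\in B_{Harm}(R)$, which is exactly the kernel of $K_{-2}$, integration by parts gives $\langle f,\phi\rangle=4\langle L_{-1}u,\phi\rangle=-4\langle u,K_{-2}\phi\rangle=0$. For uniqueness, the difference of two solutions orthogonal to $B_{Harm}(R)$ lies in the kernel of $K_{-2}$ on $H^1(-2)$, which is precisely $B_{Harm}(R)$, and is also orthogonal to $B_{Harm}(R)$, hence vanishes.

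The one point requiring care is the passage between smooth and weak sections: that $u\in H^2(-1)$ genuinely has $L_{-1}u$ as an $H^1(-2)$ section with the stated second derivatives, that the commutation identity and the integration-by-parts formula $\langle L_{-1}u,\phi\rangle=-\langle u,K_{-2}\phi\rangle$ persist for $u$ only in $H^2(-1)$ rather than smooth, and that $K_{-2}L_{-1}u=\tfrac14(D_{-1}-2)u$ holds weakly. All of these follow from the density of smooth compactly supported sections in the Sobolev spaces together with continuity of the operators involved, so I do not expect a genuine obstacle here; the only substantive input is the uniform bound for $(D_{-1}-2)^{-1}$ supplied by the preceding lemma.
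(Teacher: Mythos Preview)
Your proposal is correct and follows essentially the same route as the paper: apply the Green's operator for $(D_{-1}-2)=4K_{-2}L_{-1}$ to $g$ to obtain $u\in H^2(-1)$, then set $f=4L_{-1}u$. The paper's proof is terse and omits the verification of the $H^1(-2)$ bound, the orthogonality $f\perp B_{Harm}(R)$, and uniqueness; your added details for these points are all correct and in the intended spirit.
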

\begin{proof}
We apply the Green's operator for $(D_{-1}-2)=4K_{-2}L_{-1}$ to find a particular solution $F\in H^2(-1)$ such that $K_{-2}L_{-1}F=g$.  The norm of $F$ is uniformly bounded by the norm of $g$.  The quantity $4L_{-1}F$ is the desired solution of the equation.  
\end{proof}
 
The next consideration is change of variables. We follow \cite[Chapter 4]{Wlbers}.  Let $\mu$ be a harmonic Beltrami differential and let $\Lambda^{\mu}$ be the $R^{\mu}$ hyperbolic metric.  Let $z$ be a local coordinate on $R$, $w$ a local coordinate on $R^{\mu}$ and $w=f(z)$ a local representation of the quasiconformal homeomorphism with differential $\mu$.  We write $\eta(w)\in S(r)^{\mu}$ for a $(r/2,-r/2)$ symmetric tensor on $R^{\mu}$, and $f^*_r\eta$ for its {\em pullback} $\eta(f) f^{r/2}_z/\overline{f^{r/2}_z}\in S(r)$ (we fixed a square root of the $R$ canonical bundle; by continuity for $\mu$ small, we have a square root for the $R^{\mu}$ canonical bundle and for $f_z$).  Let $\partial(\mu)$ be the local coordinate operator $\frac{\partial}{\partial z}-\bar\mu\frac{\partial}{\partial \bar z}$.  

\begin{lemma}\textup{\cite[Lemma 4.1]{Wlbers}}\label{pullk} For $\eta\in S(r)^{\mu}$ a tensor on $R^{\mu}$, the pullback of the operator $K_r^{\mu}$ on $R^{\mu}$ is given as
\begin{multline*}
f^*_{r+1}(K_r^{\mu}\eta)=(1-|\mu|^2)^{-1/2}\big((f^*\Lambda^{\mu})^{(r-1)/2)}\partial(\mu)((f^*\Lambda^{\mu})^{-r/2}f_r^*\eta) \\+\frac{r}{2}(f^*\Lambda^{\mu})^{-1/2}f_r^*\eta\,\partial(\mu)
\log((\overline{f_z})^2(1-|\mu|^2))\big).
\end{multline*}
\end{lemma}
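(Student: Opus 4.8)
The plan is to carry out a direct computation in a local coordinate, using the defining formula $K_r = \lambda^{r-1}(\partial/\partial z)\lambda^{-r}$ for the metric derivative, where $\Lambda = (\lambda|dz|)^2$. First I would fix local coordinates $z$ on $R$ and $w$ on $R^\mu$, write $w = f(z)$, and set $\Lambda^\mu = (\ell(w)|dw|)^2$ so that the pullback $f^*\Lambda^\mu = (\ell(f(z))\,|f_z|\,|1 + (\bar\mu)(\overline{f_{\bar z}}/\overline{f_z})|\cdots)^2$ — more precisely, using $f_{\bar z} = \mu f_z$, one gets $f^*\Lambda^\mu = \ell(f)^2 |f_z|^2 (1-|\mu|^2)\,|dz|^2$. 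Abbreviate $\Psi := (f^*\Lambda^\mu)^{1/2} = |\ell(f)|\,|f_z|\,(1-|\mu|^2)^{1/2}$; this is the quantity playing the role of $\lambda$ for the pulled-back metric, up to the unimodular factor coming from the chosen square root of the canonical bundle. The key mechanical identity I need is the chain rule for $\partial/\partial w$ expressed through the operator $\partial(\mu) = \partial_z - \bar\mu\,\partial_{\bar z}$: since $f_{\bar z} = \mu f_z$, a function $u$ of $w$ satisfies $\partial_z(u\circ f) = (u_w\circ f)\,f_z$ and $\partial_{\bar z}(u\circ f) = (u_w\circ f)\,f_{\bar z} + (u_{\bar w}\circ f)\,\overline{f_{\bar z}}$... actually cleaner is to invert: $u_w\circ f = (f_z)^{-1}(1-|\mu|^2)^{-1}\,\partial(\mu)(u\circ f)$ on the nose (this is the standard fact that $\partial(\mu)$ pulls back $\partial_w$ up to the Jacobian factor $f_z(1-|\mu|^2)$). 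That is the single computational lever that converts the $w$-derivative in $K_r^\mu$ into the $\partial(\mu)$-derivative in the statement.

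The main steps, in order. \emph{Step 1:} Express $K_r^\mu \eta = \ell^{r-1}\,\partial_w(\ell^{-r}\eta)$ in the $w$-coordinate, then precompose with $f$ and multiply by the frame factor $f_z^{(r+1)/2}/\overline{f_z}^{(r+1)/2}$ defining $f^*_{r+1}$. \emph{Step 2:} Apply the chain-rule identity from the previous paragraph to rewrite $\partial_w(\ell^{-r}\eta)\circ f$ in terms of $\partial(\mu)$ acting on $(\ell^{-r}\eta)\circ f$, producing a Jacobian factor $(f_z)^{-1}(1-|\mu|^2)^{-1}$. \emph{Step 3:} Re-express everything in terms of $\Psi = (f^*\Lambda^\mu)^{1/2}$ rather than $\ell\circ f$, i.e. substitute $\ell\circ f = \Psi\,|f_z|^{-1}(1-|\mu|^2)^{-1/2}$ and likewise for $f^*_r\eta = (\eta\circ f)\,f_z^{r/2}/\overline{f_z}^{r/2}$. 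The powers of $(1-|\mu|^2)$ collect to the overall $(1-|\mu|^2)^{-1/2}$ in front and the factor $(f^*\Lambda^\mu)^{(r-1)/2}$ multiplying $\partial(\mu)((f^*\Lambda^\mu)^{-r/2} f^*_r\eta)$. \emph{Step 4:} The leftover terms — those where $\partial(\mu)$ falls on the $|f_z|$ and unimodular pieces rather than on $(f^*\Lambda^\mu)^{-r/2}f^*_r\eta$ — must be collected and identified with $\tfrac{r}{2}(f^*\Lambda^\mu)^{-1/2}f^*_r\eta\,\partial(\mu)\log((\overline{f_z})^2(1-|\mu|^2))$. This uses that $\Psi^2 = \ell(f)^2|f_z|^2(1-|\mu|^2)$ so $\log\Psi^2 = \log\ell(f)^2 + \log|f_z|^2 + \log(1-|\mu|^2)$, and that $|f_z|^2 = f_z\overline{f_z}$, so the ``extra'' logarithmic derivative is $\partial(\mu)\log(f_z\overline{f_z}(1-|\mu|^2))$; the holomorphic-in-$z$ part $\partial(\mu)\log f_z$ is handled by the identity $\partial(\mu)\log f_z = $ (something expressible via $\mu_z$), but in fact the cleaner route is to absorb $\log f_z$ into the frame normalization and verify only $\partial(\mu)\log\overline{f_z}^2$ survives — this bookkeeping is the delicate point.

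The main obstacle I anticipate is precisely Step 4: keeping the bookkeeping of the square roots consistent. There are three separate sources of $f_z$-type factors — the Jacobian of the coordinate change, the frame factors $f_z^{r/2}/\overline{f_z}^{r/2}$ built into $f^*_r$ and $f^*_{r+1}$, and the $|f_z|$ inside $(f^*\Lambda^\mu)^{1/2}$ — and the claimed formula is exactly the statement that after cancellation the only uncancelled logarithmic derivative is of $(\overline{f_z})^2(1-|\mu|^2)$, with coefficient $r/2$. Getting the holomorphic piece $\partial(\mu)\log f_z$ to cancel (it should, because $f$ is the solution of $f_{\bar z} = \mu f_z$, so $\partial(\mu)$ interacts nicely with $\log f_z$) while the antiholomorphic piece $\partial(\mu)\log\overline{f_z}$ does not, is where sign and power conventions are easy to slip. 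Everything else is the mechanical chain rule of Step 2 together with the algebraic regrouping of Step 3, both of which are routine once the chain-rule identity $u_w\circ f = (f_z(1-|\mu|^2))^{-1}\partial(\mu)(u\circ f)$ is in hand. Since this is Lemma 4.1 of \cite{Wlbers}, I would in practice quote that reference, but the self-contained verification is the computation just outlined.
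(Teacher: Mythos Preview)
The paper does not supply a proof of this lemma at all: it is simply quoted as \cite[Lemma 4.1]{Wlbers} and then used. So there is no ``paper's own proof'' to compare against beyond the citation.

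Your proposal is the natural direct computation, and the outline is sound. The chain-rule identity $u_w\circ f=(f_z(1-|\mu|^2))^{-1}\partial(\mu)(u\circ f)$ is correct (it follows immediately from $\partial_z(u\circ f)=(u_w\circ f)f_z+(u_{\bar w}\circ f)\,\bar\mu\,\overline{f_z}$ and the analogous $\partial_{\bar z}$ formula, since the $u_{\bar w}$ terms cancel in $\partial(\mu)$), the expression $f^*\Lambda^\mu=\ell(f)^2|f_z|^2(1-|\mu|^2)$ for the pulled-back area density is correct, and you have correctly isolated Step~4 as the only nontrivial bookkeeping. Your intuition there is also right: the three sources of $f_z$-factors contribute logarithmic derivatives, and the $\partial(\mu)\log f_z$ pieces coming from the Jacobian and from the holomorphic half of the frame factors $f_z^{r/2}$ combine and cancel against the one hidden in $(f^*\Lambda^\mu)^{-r/2}$, leaving only the antiholomorphic contribution $\partial(\mu)\log(\overline{f_z})^2$ together with $\partial(\mu)\log(1-|\mu|^2)$, with the net coefficient $r/2$. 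Since the paper treats the lemma as a citation, your self-contained verification is more than what the present paper provides; in practice one would, as you say, simply quote \cite{Wlbers}.
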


The pullback of the operator can be expressed in terms of the operators $K_r$ and $L_r$ and an order zero operator by substituting the relation $f^*\Lambda^{\mu}=e^{2h}\Lambda$, to express the pullback metric in terms of the base metric. The pullback is a first-order differential operator with coefficients the pullback hyperbolic metric and its first derivative as well as the quantity $\log((\overline{f_z})^2(1-|\mu|^2))$ and its first derivative.  Lemma \ref{metricvar} provides uniform $C^{2,\alpha}$ bounds for the $\mu$-derivatives of the pullback hyperbolic metric.  Earle studied the $C^{k+1,\alpha}$ variation of normalized solutions of the Beltrami equation for $C^{k,\alpha}, k>0$ Beltrami differentials \cite{Earbelt}.   His Theorem 2 provides analytic dependence and uniform $C^{2,\alpha}$ bounds for the $\mu$-derivatives of $\log((\overline{f_z})^2(1-|\mu|^2))$.   In summary, the pullback $K_r^{\mu}$ depends analytically on $\mu$ and there are uniform bounds in terms of the $C^{2,\alpha}$ norm of $\mu$ for the $\mu$-derivatives of the coefficients. 

The principal step to analyzing Weil-Petersson variation is to characterize the $R^{\mu}$ harmonic representatives of the coordinate vector fields $L^{\mu}\mu_j$; see Lemma 1 and the description of the harmonic Beltrami projection.  A first-order variation direct calculation was provided in Theorem 2.9 of \cite{Wlchern}.  Now our approach is to give defining equations and apply the analytic implicit function theorem.  This is the approach of Lemma 4.3 of \cite{Wlbers}.  

We seek families 
$\{\omega_j^{\mu}\}_{j=1}^n$, for $n$ the Teichm\"{u}ller space dimension, of Beltrami differentials $H^1(-2)$ such that
\[
L^{\mu}\omega_j^{\mu}\in B_{Harm}(R^{\mu}),
\]
the $L^{\mu}$ images are $R^{\mu}$ harmonic Beltrami differentials.  Equivalently the $K^{\mu}_{-2}$ derivatives vanish
\[
K^{\mu}_{-2}L^{\mu}\omega_j^{\mu}\,=\,0.
\]
Provided with the existence of the families $\{L^{\mu}\omega_j^{\mu}\}$, we apply an induction argument to show that the $\mu$-derivatives are appropriately bounded.  We then apply the Gram-Schmidt procedure to obtain an $R^{\mu}$ orthonormal frame.  The harmonic Beltrami projection is given in terms of the orthonormal frame.  We use the $C^{2,\alpha}$ and $L^2$ potential theory bounds to appropriately bound the $\mu$-derivatives of the families $\{L^{\mu}\omega_j^{\mu}\}$.   The bounds will carry over to the harmonic Beltrami projection and the Weil-Petersson product. 

We specialize and begin with an {\em orthonormal basis} $\{\mu_j\}_{j=1}^n$ for $B_{Harm}(R)$.   The families $\{\omega^{\mu}_j\}_{j=1}^n$ of Beltrami differentials $B(R)$ will satisfy
\begin{equation}\label{products}
\langle \omega^{\mu}_j,\mu_k\rangle\,=\,\delta_{jk}
\end{equation}
and
\begin{equation}\label{harmonic}
f_{-1}^*K^{\mu}_{-2}L^{\mu}\omega_j^{\mu}\,=\,0.
\end{equation}

\begin{lemma}\label{omegavar}\textup{\cite[Lemma 4.3]{Wlbers}} There exist Beltrami differentials $\omega^{\mu}_j\in S_{2,\alpha}(-2),$ $j=1,\dots,n$, depending analytically on $\mu\in B_{Harm}(R)\cap\stwoalpha$, such that for $\mu$ small, $\{\omega_j^{\mu}\}^n_{j=1}$ satisfy equations \textup{(\ref{products})} and \textup{(\ref{harmonic})}.  In particular the differentials $L\mmu\omega_j\mmu$ are harmonic on $R\mmu$.   The quantities $L^{\mu}\omega_j^{\mu}$ varying in $H^1(-2)$ have uniformly bounded derivatives at the origin as functions of $\mu\in B_{Harm}(R)\cap\stwoalpha$.
\end{lemma}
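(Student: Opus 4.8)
The plan is to mimic the proof of Lemma~\ref{metricvar}: assemble one analytic map of Banach spaces whose zero locus is cut out by the normalization~(\ref{products}) and the harmonicity~(\ref{harmonic}), invert its linearization by hand, and apply the analytic implicit function theorem~\cite[Section 3.3]{Berg}. Substituting $f^{*}\Lambda^{\mu}=e^{2h}\Lambda$ into Lemma~\ref{pullk} exhibits the composition $\Psi^{\mu}:=f^{*}_{-1}K^{\mu}_{-2}L^{\mu}$ as a first-order operator on $S_{2,\alpha}(-2)$ whose coefficients are assembled from $(1-|\mu|^{2})^{-1}$, the pullback metric $e^{2h}$ and its derivative, and $\log((\overline{f_{z}})^{2}(1-|\mu|^{2}))$ and its derivative; by Lemma~\ref{metricvar} and Earle's Theorem~2~\cite{Earbelt} these are analytic in $\mu$ with uniformly $C^{2,\alpha}$-bounded $\mu$-derivatives, and $\Psi^{0}=K_{-2}$. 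Exactly as in Lemma~\ref{metricvar} I would first complexify, replacing each occurrence of $\bar\mu$ by an independent variable to obtain a genuinely complex-analytic map and restoring $\bar\mu$ at the end for the real-analytic conclusion. The defining map then sends $(\mu,\omega_{1},\dots,\omega_{n})$ to the tuple whose $j$-th block is $\bigl((\langle\omega_{j},\mu_{k}\rangle-\delta_{jk})_{k=1}^{n},\,\Psi^{\mu}\omega_{j}\bigr)\in\mathC^{n}\times S_{1,\alpha}(-1)$; at $\mu=0$, $\omega_{j}=\mu_{j}$ it vanishes, the first block because $\{\mu_{j}\}$ is $L^{2}$-orthonormal, the second because harmonic Beltrami differentials lie in the kernel of $K_{-2}$.

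The key step is invertibility of the differential in the $\omega$-variables at this base point, $\dot\omega_{j}\mapsto\bigl((\langle\dot\omega_{j},\mu_{k}\rangle)_{k},\,K_{-2}\dot\omega_{j}\bigr)$. Injectivity is immediate, since $K_{-2}\dot\omega_{j}=0$ forces $\dot\omega_{j}\in B_{Harm}(R)$ and then the vanishing of the pairings against the $\mu_{k}$ forces $\dot\omega_{j}=0$. For surjectivity, given $g\in S_{1,\alpha}(-1)$ I would solve $(D_{-1}-2)F=g$ with the Green's operator and bootstrap with the interior Schauder estimate and the $L^{2}$-to-$H^{2}$ bound for $(D_{-1}-2)^{-1}$ of the preceding lemma --- as in the proof of Lemma~\ref{metricvar} --- to get $F\in S_{3,\alpha}(-1)$, so that $4L_{-1}F$ solves $K_{-2}(\cdot)=g$ and is automatically orthogonal to $B_{Harm}(R)$; adding a suitable element of the finite-dimensional space $B_{Harm}(R)$ then corrects the finitely many pairings. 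The analytic implicit function theorem now produces $\omega^{\mu}_{j}\in S_{2,\alpha}(-2)$, analytic in $\mu\in B_{Harm}(R)\cap\stwoalpha$ for $\mu$ small, solving~(\ref{products}) and~(\ref{harmonic}); harmonicity of $L^{\mu}\omega^{\mu}_{j}$ on $R^{\mu}$ is a restatement of~(\ref{harmonic}).

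It remains to bound the $\mu$-derivatives at the origin uniformly in $R$, which I would do by induction on the order $r$ in the $L^{2}$/Sobolev category, where the potential-theory bounds are surface-independent. Differentiating $\Psi^{\mu}\omega^{\mu}_{j}\equiv 0$ a total of $r$ times at $\mu=0$ gives $K_{-2}\,\omega^{(r)}_{j}\big|_{0}=\tau_{r}$, with $\tau_{r}$ a universal polynomial in the $\mu$-derivatives of the coefficients of $\Psi^{\mu}$ --- uniformly $C^{2,\alpha}$-bounded by Lemma~\ref{metricvar} and~\cite{Earbelt} --- applied to the lower-order terms $\omega^{(s)}_{j}\big|_{0}$, $s<r$, which are uniformly $H^{1}(-2)$-bounded by induction; since $\Psi^{\mu}$ is first order, $\tau_{r}$ is uniformly bounded in $L^{2}(-1)$. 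Differentiating~(\ref{products}) shows $\omega^{(r)}_{j}\big|_{0}\perp B_{Harm}(R)$ for $r\ge 1$, so Lemma~\ref{K2solve} identifies $\omega^{(r)}_{j}\big|_{0}$ as the unique such $H^{1}(-2)$-solution of $K_{-2}(\cdot)=\tau_{r}$ and bounds it uniformly; the base case $r=0$ is $\omega^{0}_{j}=\mu_{j}$, with $\|\mu_{j}\|^{2}_{H^{1}(-2)}=\langle\mu_{j},\mu_{j}\rangle+\langle L_{-2}\mu_{j},L_{-2}\mu_{j}\rangle=2$ by the commutation relations together with $K_{-2}\mu_{j}=0$ and $D_{-2}\mu_{j}=2\mu_{j}$. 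Throughout, only $L^{2}$-type control is invoked: the unit $L^{2}$-norm of $\mu_{j}$, the $H^{1}(-2)$ bounds on lower-order terms, and the $C^{2,\alpha}$ bounds on coefficients and multipliers combine in surface-independent $L^{2}$ and $H^{1}(-2)$ estimates, and the supremum norm of $\mu_{j}$ --- which is only $O(\sys^{-1/2})$ --- never enters. Finally the identity $f^{*}_{-2}(L^{\mu}\omega^{\mu}_{j})=\omega^{\mu}_{j}/(1-|\mu|^{2})$, together with the uniform $C^{2,\alpha}$-bounds on the $\mu$-derivatives of $(1-|\mu|^{2})^{-1}$, transfers the uniform $H^{1}(-2)$ bounds on the $\omega^{(r)}_{j}\big|_{0}$ to the $\mu$-derivatives of $L^{\mu}\omega^{\mu}_{j}$ at the origin.

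I expect the main obstacle to be the surface-independent invertibility of the linearized operator --- equivalently, solving $K_{-2}f=g$ with bounds independent of $R$ --- since this rests entirely on the uniform estimates for the Green's operator of $D_{-1}-2$, and the fact that $2$ lies outside the point spectrum of $D_{-1}$ is precisely the restriction $r=-1,0,1$ and is the delicate analytic input. The remaining difficulty is bookkeeping: verifying the claimed analytic dependence and uniform coefficient bounds for $\Psi^{\mu}$ by chaining Lemma~\ref{pullk}, Lemma~\ref{metricvar}, and Earle's estimates, and checking that the $C^{2,\alpha}$ existence argument and the $H^{1}$ derivative estimates are mutually consistent --- that the $S_{2,\alpha}(-2)$-valued derivatives $\omega^{(r)}_{j}\big|_{0}$ furnished by the implicit function theorem coincide with the $H^{1}(-2)$-solutions bounded by Lemma~\ref{K2solve}.
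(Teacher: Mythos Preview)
Your proposal is correct and follows the paper's two-part strategy: apply the analytic implicit function theorem to the system (\ref{products})--(\ref{harmonic}), then bound the $\mu$-derivatives at the origin by induction, differentiating the defining equation and invoking Lemma~\ref{K2solve}. Your induction (including the base case computation $\|\mu_j\|_{H^1}^2=2$ and the orthogonality $\omega_j^{(r)}|_0\perp B_{Harm}(R)$ for $r\ge1$) matches the paper's essentially line for line.

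The one substantive difference is the choice of Banach spaces in the implicit function theorem. The paper takes the $\omega_j$ to vary in $H^1(-2)$ with target $\mathbb{C}^{n^2}\times L^2(-1)^n$; invertibility of the linearization $(\omega)\mapsto(\langle\omega,\mu_k\rangle,K_{-2}\omega)$ is then immediate from Lemma~\ref{K2solve} as stated. You instead take the $\omega_j$ in $S_{2,\alpha}(-2)$ with target $S_{1,\alpha}(-1)$, which has the virtue of delivering the $S_{2,\alpha}$ regularity in the lemma statement directly, but your surjectivity step now requires a H\"older-space Schauder estimate for $(D_{-1}-2)^{-1}$ with constants uniform in the surface. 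The paper only establishes such an estimate for $D_0$ (in the proof of Lemma~\ref{metricvar}) and the $L^2\to H^2$ bound for $D_{-1}$; extending this to $D_{-1}$ in H\"older norms is plausible (same Green's-function-plus-interior-Schauder argument on the universal cover) but is an extra ingredient you would need to supply. The paper's Sobolev route avoids this, at the cost of producing $\omega_j^{\mu}$ a priori only in $H^1(-2)$; the $S_{2,\alpha}$ regularity in the statement then follows after the fact from harmonicity of $L^{\mu}\omega_j^{\mu}$, though the paper does not spell this out.
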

\begin{proof}  We first consider that the map
\[
F:(\omega_j,\mu)\,\longrightarrow\,\big(\langle \omega_j,\mu_k\rangle ,f_{-1}^*K^{\mu}_{-2}L^{\mu}\omega_j\big)
\]
from $H^1(-2)^n\times S_{2,\alpha}(-2)$ to $\mathbb C^{n^2}\times L^2(-1)^n$ is real analytic for $\mu$ small.  The inner products are constant in $\mu$.  As discussed above, the pullback operator $f^*_{-1}K_{-2}^{\mu}$ has coefficients real analytic in $C^{1,\alpha}$ for $\mu$ in 
$\stwoalpha$.   The evaluation $f^*_{-1}K_{-2}^{\mu}L^{\mu}\omega_j$ is the sum of $\omega_j$ and its first derivatives with coefficients varying real analytically in $C^{1,\alpha}$.  The evaluation is real analytic in $L^2(-1)$ since $\omega_j\in H^1(-2)$.  The map $F=F(\omega_j,\mu,\bar\mu)$ is complex linear in $\omega_j$ and real analytic in $\mu$.  If we replace $\bar\mu$ by the independent variable $\rho$, the resulting map will be complex analytic. 

To apply the analytic implicit function theorem, it remains to check the differential of $F$ in $(\omega_j)$ at the origin $\mu=\rho=0$. The differential is the map 
$(\omega)\rightarrow (\langle\omega,\mu_k\rangle, K_{-2}\omega)$ from $H^1(-2)$ to $\mathbb C^n\times L^2(-1)$. The inner products provide an invertible map from $B_{Harm}(R)$ to $\mathbb C^n$ and, by Lemma \ref{K2solve}, $K_{-2}$ is an invertible map from $H^1(-2)\cap B_{Harm}(R)^{\perp}$ to $L^2(-1)$.   The differential of $F$ is invertible.  The frame $\{\omega^{\mu}_j\}_{j=1}^n$ exists by the analytic implicit function theorem applied to the equation $F(\omega_j,\mu,\bar\mu)=0$, \cite[Section 3.3]{Berg}.

We now bound the $\mu$ derivatives of $\omega^{\mu}_j$ by induction. We write $\|\ \|_{C^{2,\alpha}},$ $\|\ \|_{L^2}$ and $\|\ \|_{H^k}$ for the $C^{2,\alpha}$, the square-integrable and the Sobolev $k$-norms for sections $S(r)$. For a quantity $Q$, depending analytically on $\mu$, we write $Q^{(k)}$ for any of its $k^{th}$ derivatives; our considerations apply equally for real and complex derivatives.  The induction statement is that for each non negative integer $k$, there is a positive constant $C_k$, independent of the surface, such that $\|(\omega^{\mu}_j)^{(k)}\|_{H^1} \le C_k \|\mu\|_{C^{2,\alpha}}^k$.  For $k=0$, equations (\ref{products}) and (\ref{harmonic}) provide that $(\omega^{\mu}_j)^{(0)}=\mu_j$ with $\mu_j$ an orthonormal basis element of $B_{Harm}(R)$.   By an integration by parts $\|\mu_j\|^2_{H^1}=2\|\mu_j\|^2_{L^2}$ and the first estimate is established.  We have from the equations in general that $(\omega^{\mu}_j)^{(k+1)}\perp B_{Harm}(R)$ and for the $\mu$ derivative evaluated at zero,
\[
(f^*_{-1}K^{\mu}_{-2}L^{\mu}\omega^{\mu}_j)^{(k+1)}=K_{-2}(\omega^{\mu}_j)^{(k+1)}\,+\,\sum_{p=1}^{k+1}{k+1 \choose p}(f_{-1}^*K_{-2}^{\mu}L^{\mu})^{(p)}(\omega^{\mu}_j)^{(k+1-p)}=\,0.
\]
As already observed, $(f_{-1}^*K_{-2}^{\mu}L^{\mu})^{(p)}$ is a first-order differential operator with coefficients uniformly bounded in $C^{1,\alpha}$ by $C'\|\mu\|_{C^{2,\alpha}}^p$.  By the induction, the variational term $(\omega^{\mu}_j)^{(k+1-p)}$ has norm uniformly bounded in $H^1$ as  $\|(\omega^{\mu}_j)^{(k+1-p)}\|_{H^1}\le C'' \|\mu\|_{C^{2,\alpha}}^{k+1-p}$.   It follows that the above sum is uniformly bounded in $L^2$ by $\|\mu\|_{C^{2,\alpha}}^{k+1}$.  By Lemma \ref{K2solve}, we conclude that $(\omega^{\mu}_j)^{(k+1)}$ is uniformly bounded in $H^1$ by $\|\mu\|^{k+1}_{C^{2,\alpha}}$, as desired.   
\end{proof}

We are ready to bound the harmonic Beltrami projections of the coordinate vector fields (see Lemma \ref{Lmu}). 
\begin{theorem}\label{main1} Let $\{\mu_j\}_{j=1}^n$ be an orthonormal basis for $B_{Harm}(R)$.  The harmonic Beltrami projection of the coordinate vector fields $L\mmu\omega_j\mmu$ varying in $L^2(R\mmu)$ have uniformly bounded derivatives at the origin as functions of $\mu\in B_{Harm}(R)\cap S_{2,\alpha}(-2)$.   In particular the pullbacks $f_{-2}^*L\mmu\omega_j\mmu$ vary with uniformly bounded derivatives in $L^2(R)$.  The Weil-Petersson coordinate pairings $g_{jk}(\mu)=\langle P\mmu L\mmu \mu_j,P\mmu L\mmu\mu_k\rangle\mmu$ are real analytic with uniformly bounded derivatives at the origin as functions of $\mu\in B_{Harm}(R)\cap S_{2,\alpha}(-2)$.
\end{theorem}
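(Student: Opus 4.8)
The plan is to transport every quantity living on the moving surface $R^{\mu}$ back to the fixed surface $R$, where Lemmas \ref{metricvar}, \ref{omegavar} and \ref{K2solve} apply directly, and then assemble $g_{jk}$ by a Gram--Schmidt argument.

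\textbf{Change of variables.} First I would record two identities. From the explicit formula of Lemma \ref{Lmu} and the pullback convention $f^*_r\eta=\eta(f)f_z^{r/2}/\overline{f_z^{r/2}}$, a pointwise computation gives $f^*_{-2}(L^{\mu}\psi)=A\psi$ for every Beltrami differential $\psi$ on $R$, where $A=(1-|\mu|^2)^{-1}$ (the factor $f_z/\overline{f_z}$ in $L^{\mu}\psi$ cancels the $S(-2)$-pullback factor $\overline{f_z}/f_z$). Since the product of two $S(-2)$-tensors is a function and $(f^{\mu})^*\Lambda^{\mu}=e^{2h}\Lambda$ (Lemma \ref{metricvar}), the $R^{\mu}$-Hermitian pairing transforms, by the change of variables of \cite[Chapter 4]{Wlbers}, as $\langle\eta,\xi\rangle^{\mu}=\int_R(f^*_{-2}\eta)\overline{(f^*_{-2}\xi)}\,e^{2h}\Lambda$, whence $\langle L^{\mu}\psi_1,L^{\mu}\psi_2\rangle^{\mu}=\int_R A^2e^{2h}\,\psi_1\overline{\psi_2}\,\Lambda$. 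The functions $A$ and $e^{2h}$ are analytic in $\mu$, uniformly bounded above and below, and have uniformly $C^{2,\alpha}$-bounded (hence $C^0$-bounded) $\mu$-derivatives at the origin for $\mu$ small --- by Lemma \ref{metricvar} for $h$, trivially for $A$.

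\textbf{The first two assertions.} Because $L^{\mu}\omega_j^{\mu}\in B_{Harm}(R^{\mu})$ (Lemma \ref{omegavar}), the harmonic projection fixes it, so $P^{\mu}L^{\mu}\omega_j^{\mu}=L^{\mu}\omega_j^{\mu}$, with $f^*_{-2}L^{\mu}\omega_j^{\mu}=A\omega_j^{\mu}$ by the step above. Lemma \ref{omegavar} provides that $\omega_j^{\mu}$ is analytic in $\mu$ with $\mu$-derivatives uniformly bounded in $H^1(-2)\subset L^2(-2)$; multiplying by $A$ and applying the Leibniz rule preserves uniform $L^2$ bounds on all $\mu$-derivatives. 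Hence $f^*_{-2}L^{\mu}\omega_j^{\mu}$ has uniformly bounded $\mu$-derivatives in $L^2(R)$, and transporting back through the $e^{2h}$ factor (bounded above and below) gives the same for $P^{\mu}L^{\mu}\omega_j^{\mu}$ in $L^2(R^{\mu})$.

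\textbf{The metric pairings.} Here I would pass to an $R^{\mu}$-orthonormal frame. At $\mu=0$ the differentials $L^0\omega_j^0=\mu_j$ are an $L^2$-orthonormal basis of $B_{Harm}(R)$, and the Gram matrix $\Gamma_{jk}(\mu)=\langle L^{\mu}\omega_j^{\mu},L^{\mu}\omega_k^{\mu}\rangle^{\mu}=\int_R A^2e^{2h}\,\omega_j^{\mu}\overline{\omega_k^{\mu}}\,\Lambda$ is analytic in $\mu$ with uniformly bounded derivatives (by the above and Lemma \ref{omegavar}) and equals $I$ at $\mu=0$. Thus, on a neighborhood of the origin \emph{of radius independent of the surface}, $\Gamma(\mu)$ stays positive definite and close to $I$; there $\{L^{\mu}\omega_j^{\mu}\}$ is a basis of the $n$-dimensional space $B_{Harm}(R^{\mu})$, and Gram--Schmidt produces an $L^2(R^{\mu})$-orthonormal basis $e_l^{\mu}=\sum_m G_{lm}(\mu)L^{\mu}\omega_m^{\mu}$ whose coefficient matrix $G(\mu)$ is a fixed analytic function of $\Gamma(\mu)$ near $I$, so $G$ is analytic in $\mu$ with uniformly bounded derivatives. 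Writing $P^{\mu}=\sum_l\langle\cdot,e_l^{\mu}\rangle^{\mu}e_l^{\mu}$,
\[
g_{jk}(\mu)=\langle P^{\mu}L^{\mu}\mu_j,P^{\mu}L^{\mu}\mu_k\rangle^{\mu}=\sum_l\langle L^{\mu}\mu_j,e_l^{\mu}\rangle^{\mu}\,\overline{\langle L^{\mu}\mu_k,e_l^{\mu}\rangle^{\mu}},
\]
with $\langle L^{\mu}\mu_j,e_l^{\mu}\rangle^{\mu}=\sum_m\overline{G_{lm}(\mu)}\int_R A^2e^{2h}\,\mu_j\overline{\omega_m^{\mu}}\,\Lambda$. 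Each such integral is analytic in $\mu$ with uniformly bounded derivatives: $\mu_j\in B_{Harm}(R)$ has $\|\mu_j\|_{L^2}=1$, the $\omega_m^{\mu}$ have uniformly $L^2$-bounded $\mu$-derivatives, $A^2e^{2h}$ has uniformly $C^0$-bounded $\mu$-derivatives, and one applies Cauchy--Schwarz. So $g_{jk}$ is a polynomial expression in quantities analytic in $\mu$ with uniformly bounded $\mu$-derivatives at the origin, and the Leibniz rule --- together with its higher-order analogue for the composition with the Gram--Schmidt map --- gives the same for $g_{jk}$ itself.

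\textbf{Main obstacle.} The substantive difficulty is not in this assembly but in the inputs it relies on: the uniform $H^1$-bounds for $\omega_j^{\mu}$ in Lemma \ref{omegavar} (which rest on the uniform Green's-operator estimate) and the uniform $C^{2,\alpha}$-bounds for $h$ in Lemma \ref{metricvar}. The one genuinely new point to be careful about here is that the neighborhood of $\mu=0$ on which $\Gamma(\mu)$ is invertible, the frame $\{e_l^{\mu}\}$ exists, and all estimates hold must have radius independent of $R$ --- which is exactly what the surface-independence of the derivative bounds on $A$, $e^{2h}$, $\omega_j^{\mu}$, and $\Gamma$ delivers.
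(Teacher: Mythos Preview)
Your proposal is correct and follows essentially the same route as the paper's own proof: both pull back to $R$ via the identity $f_{-2}^*L^{\mu}\omega_j^{\mu}=(1-|\mu|^2)^{-1}\omega_j^{\mu}$, apply Gram--Schmidt to the frame $\{L^{\mu}\omega_j^{\mu}\}$ to realize $P^{\mu}$ explicitly, and reduce everything to uniform bounds on the integrals $\int_R(1-|\mu|^2)^{-2}\omega_j^{\mu}\overline{\omega_k^{\mu}}\,(f^{\mu})^*\Lambda^{\mu}$ and $\int_R(1-|\mu|^2)^{-2}\mu_j\overline{\omega_k^{\mu}}\,(f^{\mu})^*\Lambda^{\mu}$, which follow from Lemmas \ref{metricvar} and \ref{omegavar}. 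Your explicit emphasis on the surface-independent radius of the neighborhood where $\Gamma(\mu)$ remains invertible is a helpful clarification that the paper leaves implicit.
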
 
\begin{proof}  The key is the behavior of the projection $P\mmu$.  Begin with the Beltrami differentials specified in Lemma \ref{omegavar} and apply the Gram-Schmidt process for the pairing $\langle\ ,\ \rangle\mmu$, to obtain an orthonormal basis.  The orthonormal basis is a linear transform of the original basis and the operator $L\mmu$ is linear; the new basis has the form $\{L\mmu\tilde\omega_j\mmu\}^n_{j=1}$. Each new basis vector $\tilde\omega_j\mmu$ is a rational linear expression in the inner products $\langle L\mmu\omega_p\mmu,L\mmu\omega_q\mmu\rangle\mmu$ and a linear combination of the $L\mmu\omega_k\mmu$.  The pairings have initial values $\delta_{pq}$.  Since the Beltrami differentials $\{L\mmu\tilde\omega_j\mmu\}_{j=1}^n$ are orthonormal and $R\mmu$ harmonic, the projection $P\mmu$ is simply given as $P\mmu(\nu)=\sum_{j=1}^n\langle\nu,L\mmu\tilde\omega_j\mmu\rangle\mmu L\mmu\tilde\omega_j\mmu$ and the Weil-Petersson coordinate pairing is given as $g_{jk}(\mu)=\langle P\mmu L\mmu\mu_j,P\mmu L\mmu\mu_k\rangle\mmu$.  The desired uniform bounds for the derivatives follow provided we show that $L\mmu\omega\mmu_j$ varying in $L^2(R\mmu)$, $f_{-2}^*L\mmu\omega_j\mmu$ varying in $L^2(R)$, 
$\langle L\mmu\omega_j\mmu,L\mmu\omega_k\mmu\rangle\mmu$ and $\langle L\mmu\mu_j,L\mmu\omega_k\mmu\rangle\mmu$ each have uniformly bounded derivatives at the origin as functions of $\mu\in B_{Harm}(R)\cap S_{2,\alpha}(-2)$.   

We consider the bounds. The pullback maps $f^*_{-2}$ provide trivializations for the Hilbert bundle $L^2\cap S\mmu(-2)$.  For the first two quantities it suffices to show that $f_{-2}^*L\mmu\omega_j\mmu=(1-|\mu|^2)^{-1}\omega_j\mmu$ varies appropriately in $L^2(R)\cap S(-2)$ for $\mu$ small.   This statement follows immediately from Lemma \ref{omegavar}.  For the integrals use the quasiconformal map $f\mmu$ to pullback integration to $R$.   We have the integrals
\[
\langle L\mmu\omega_j\mmu,L\mmu\omega_k\mmu\rangle\mmu\,=\,\int_R(1-|\mu|^2)^{-2}\omega_j\mmu\overline{\omega_k\mmu}\,(f\mmu)^*\Lambda\mmu
\]
and
\[
\langle L\mmu\mu_j,L\mmu\omega_k\mmu\rangle\mmu\,=\,\int_R(1-|\mu|^2)^{-2}\mu_j\overline{\omega_k\mmu}\,(f\mmu)^*\Lambda\mmu.
\]
By Lemmas \ref{metricvar} and \ref{omegavar}, the integrals are real analytic and have uniformly bounded derivatives at the origin as functions of $\mu\in B_{Harm}(R)\cap S_{2,\alpha}(-2)$.  
\end{proof}

We do not expect the quantities $L\mmu\omega_j\mmu$ varying in $\stwoalpha$ to be uniformly bounded, since the recursion for the terms $(L\mmu\omega_j\mmu)^{(k)}$ begins with $\mu_j$ which by Lemma \ref{norm2} below is not bounded in $\stwoalpha$ independent of the surface.

\section{Comparing norms for harmonic Beltrami differentials.}\label{compare}

We relate the $C^0$, $C^{2,\alpha}$, $L^2$ and $H^k$ norms for harmonic Beltrami differentials.  We consider the $C^0$ and $C^{2,\alpha}$ norms below and then review our direct comparison of the $C^0$ and $L^2$ norms from \cite[Corollary 11]{Wlcurv}.  The $L^2$ and Sobolev $k$-norms are related by integration by parts.

\begin{lemma}\label{norm1} Given a non negative integer $k$ and $0<\alpha<1$, there is a positive constant $C_{k,\alpha}$ such that the norms of a harmonic Beltrami differential satisfy $\|\mu\|_0\le\|\mu\|_{k,\alpha}\le C_{k,\alpha}\|\mu\|_0$.
\end{lemma}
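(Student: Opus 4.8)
The plan is to establish the trivial inequality $\|\mu\|_0 \le \|\mu\|_{k,\alpha}$ directly from the definition of the $C^{k,\alpha}$ norm, and then to concentrate all the work on the reverse bound $\|\mu\|_{k,\alpha} \le C_{k,\alpha}\|\mu\|_0$ with a constant depending only on $k$ and $\alpha$ (not on the surface). The key structural fact is that a harmonic Beltrami differential has the form $\mu = \Lambda^{-1}\otimes\bar q$ with $q$ a holomorphic quadratic differential, so $\mu$ satisfies a differential equation: $K_{-2}\mu = 0$, equivalently $\mu$ lies in the kernel of a first-order elliptic operator whose coefficients involve only the hyperbolic metric. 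Because the hyperbolic metric near a puncture is isometric to the fixed model $((|z|\log|z|)^{-1}|dz|)^2$, and because in the metric $\|\ \|_{k,\alpha}$ distances are truncated at unity, the relevant geometry is, up to uniformly bounded distortion, the same on every surface: every point of $R$ has a neighborhood on which, in suitable coordinates, the hyperbolic metric and hence the operator $K_{-2}$ are uniformly comparable to a fixed model on the hyperbolic unit disk (either a genuine thick-part coordinate or the cusp model). This is the standard ``bounded geometry'' of hyperbolic surfaces.

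The main step is then an interior elliptic estimate applied uniformly across this fixed collection of local models. On each model chart, $\mu$ solves $\bar\partial(\Lambda\mu) = 0$, i.e. the function $u := \Lambda\mu$ (which represents $\bar q$) is anti-holomorphic, or, writing things covariantly, $\mu$ is annihilated by $K_{-2}$. I would invoke the $C^{2,\alpha}$ interior Schauder estimate for the $\bar\partial$-Laplacian $D_{-2} = 4K_{-3}L_{-2}$, or more simply the interior estimate for holomorphic/antiholomorphic functions: on a slightly smaller concentric ball, all derivatives of a holomorphic function are bounded by its sup norm on the larger ball, with absolute constants. Transporting this back through the uniformly-bounded coordinate changes to the metric norms $\|\ \|_{k,\alpha}$ gives, on each chart, $\|\mu\|_{k,\alpha,\text{chart}} \le C_{k,\alpha}\|\mu\|_{0,\text{chart}} \le C_{k,\alpha}\|\mu\|_0$. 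Taking the supremum over the (overlapping, but uniformly locally finite) charts covering $R$ yields the global bound $\|\mu\|_{k,\alpha} \le C_{k,\alpha}\|\mu\|_0$. The point that the norm $\|\ \|_{k,\alpha}$ was defined with charts designated at each puncture and H\"older estimates taken over distance at most unity is exactly what makes this patching produce a surface-independent constant.

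The main obstacle — and the only place genuine care is needed — is the behavior at the punctures, where the hyperbolic metric degenerates and a naive interior estimate on a Euclidean ball would not be uniform. The resolution is that the degeneration is \emph{universal}: in the cusp chart the metric is exactly the fixed model, a harmonic Beltrami differential is $\Lambda^{-1}\otimes\bar q$ with $q$ holomorphic having at most a simple pole, and the whole configuration is independent of $R$, so the interior estimate there is simply a once-and-for-all computation in a single model domain. Away from the cusps, injectivity radius can still be small along short geodesics, but on a hyperbolic surface every point lies in an embedded hyperbolic half-disk of radius bounded below by an absolute constant (a collar or a thick-part ball), which again gives a uniform model. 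I would organize the proof as: (i) reduce to the reverse inequality; (ii) set up the uniform atlas of model charts using bounded geometry plus the standard cusp coordinate; (iii) on each model apply the fixed interior elliptic estimate for $\ker K_{-2}$; (iv) patch, using truncation of distances at unity and uniform local finiteness, to conclude.
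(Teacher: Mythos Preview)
Your plan is essentially the paper's own proof: cover $R$ by uniformizing discs (lifts to the hyperbolic disc model) and standard cusp punctured discs, use Cauchy estimates for the holomorphic $q$ on a slightly larger concentric disc to control all $K_*,L_*$ derivatives of $\bar\varphi\rho$, and handle the cusp by a fixed computation in the model $\rho=(|z|\log|z|)^2$; the paper carries out the cusp step explicitly, tracking that $L_{-n}\cdots L_{-2}(\bar\varphi\rho)$ has magnitude $O(|z||\log|z||^{n+1})$ and hence extends boundedly across the puncture, whereas you defer this to a ``once-and-for-all'' model calculation.

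One correction: your aside that ``every point lies in an embedded hyperbolic half-disk of radius bounded below by an absolute constant'' is false in the collar of a short geodesic, and is not how the thin part is handled. The correct mechanism---which you already invoke earlier when you speak of ``suitable coordinates'' and which the paper uses under the name \emph{uniformizing discs}---is simply to lift to the universal cover $\mathbb H$: the harmonic Beltrami differential is $\Gamma$-invariant there, so you may apply the Cauchy estimate on a genuine radius-$3/4$ disc in $\mathbb H$ regardless of the injectivity radius downstairs, and the resulting $C^{k,\alpha}$ bound descends. No embedded disc or half-disc on $R$ is needed.
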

  
\begin{proof} The first inequality is immediate.  We introduce a cover of the surface and reduce the matter to uniform bounds for standard coordinate discs and punctured discs.   A cover of the surface is given by uniformizing punctured discs $\{0<|z|\le 3/4\}$ with hyperbolic metric $\rho^{-1}|dz|^2$ for $\rho=(|z|\log |z|)^2$ and by uniformizing discs $\{|z|\le 3/4\}$ with hyperbolic metric $\rho^{-1}|dz|^2$ for $\rho=(1-|z|)^2/4$.   A union of radius $1/2$ discs and punctured discs covers the surface. 
 
We first compare the norms for a disc.  The $C^0$ norm of a harmonic Beltrami differential $\overline{\varphi}\rho$ on a disc bounds the holomorphic function $\varphi$ on the radius $3/4$ circle. By the Cauchy Integral Formula the derivative of $\varphi$ of each order is bounded on the radius $1/2$ disc by the bound for $\varphi$ on the circle of radius $3/4$.  Now the differentiation operators are $K_r=\rho^{\frac{r+1}{2}}\frac{\partial}{\partial \overline{z}}\rho^{\frac{-r}{2}}$ and $K_r=\overline{L_{-r}}$.  An order $k$ derivative of $\overline{\varphi}\rho$ is a sum of products of the derivatives of $\overline{\varphi}$ and $\rho$.  Each derivative of $\overline{\varphi}\rho$ is uniformly bounded on the radius $1/2$ disc in terms of the bound for the radius $3/4$ circle.  The derivative bounds are also used to bound the H\"{o}lder $\alpha$-quotient. The $C^{k,\alpha}$ norm for the radius $1/2$ disc is uniformly bounded.    

We start with observations for the punctured disc.  The quadratic differential $\varphi$ can have a simple pole at the origin and the $n^{th}$ derivative of $\rho$ has magnitude $O(|z|^{-n}\rho)$, since the maximal term is the $n^{th}$ derivative of the logarithm.  Since the differentiation operator $L_r$ has homogeneity exponent $1/2$ in the quantity $\rho$, we observe that $L_{-n}\cdots L_{-2}(\overline{\varphi}\rho)$ is a sum of monomials each with a net exponent $\frac{n+1}{2}$ in $\rho$ and unit exponent in $\overline{\varphi}$; each monomial includes $n-1$ derivatives.  Each derivative increases the magnitude by multiplying by $|z|^{-1}$; so the $p^{th}$ derivative of $\overline{\varphi}$ has magnitude bounded by $|\varphi z^{-p}|$ and the $p^{th}$ derivative of $\rho$ has magnitude bounded by $|\rho z^{-p}|$.   Combining all the considerations it follows that $L_{-n}\cdots L_{-2}(\overline{\varphi}\rho)$ is uniformly pointwise bounded on the radius $1/2$ punctured disc by $|z||\log|z||^{n+1}\max_{|z|=3/4}|\overline{\varphi}\rho|$, a quantity vanishing at the puncture.  We also consider $K_r$ derivatives.  The quantity $\overline{\varphi}\rho$ is annihilated by $K_{-2}$ and the commutation rules provide that the $K_{n-1}$ derivative of $L_{-n}\cdots L_{-2}(\overline{\varphi}\rho)$ is a set multiple of $L_{-n+1}\cdots L_{-2}(\overline{\varphi}\rho)$. In summary, the derivatives of $\overline{\varphi}\rho$ on a punctured disc of radius $1/2$ are bounded by the maximum of the quantity of the radius $3/4$ circle.  Again the derivative bounds can be used to bound the H\"{o}lder $\alpha$-quotient.  The $C^{k,\alpha}$ norm on a punctured disc is uniformly bounded by the maximum on the circle of radius $3/4$.  
\end{proof}

A point of the Teichm\"{u}ller space $T(R)$ represents a Riemann surface $R$ and the equivalence class of an isomorphism of the fundamental group of a reference surface.  For a free homotopy class $[\alpha]$ of a non trivial, non peripheral closed curve on the reference surface, the geodesic-length $\ell_{\alpha}(R)$ is the length of the unique closed geodesic on $R$ in the corresponding free homotopy class.  Associated to the geodesic-length are the differential $d\ell_{\alpha}$ and the root-length gradient $\lambda_{\alpha}=\grad \ell_{\alpha}^{1/2}$ \cite{Wlbhv,Wlcurv}. For a pants decomposition, a maximal collection $\{\alpha_j\}_{j=1}^{3g-3+n}$ of simple, disjoint geodesics, the root-length gradients $\{\lambda_{\alpha_j}=\grad \ell_{\alpha_j}^{1/2}\}_{j=1}^{3g-3+n}$ form a basis for $B_{Harm}(R)$ \cite[Theorem 3.7]{WlFN}.  For $\ell_{\alpha},\ell_{\beta}$ small, the geodesics $\alpha,\beta$ are simple, disjoint and the gradients $(2\pi)^{1/2}\lambda_{\alpha},(2\pi)^{1/2}\lambda_{\beta}$ are approximately unit length and approximately orthogonal; in particular 
\begin{equation}\label{prodexp}
\langle \lambda_{\alpha},\lambda_{\beta}\rangle\,=\,\frac{\delta_{\alpha\beta}}{2\pi}\ +\ O(\ell_{\alpha}\ell_{\beta}^2)
\end{equation}
for $\ell_{\alpha}\le\ell_{\beta}\le c$, the Kronecker delta and the $O$ constant depending on $c$ \cite[Lemma 3.12]{Wlbhv}. For $c_0$ sufficiently small, the set $\sigma=\{\alpha\mid \ell_{\alpha}\le c_0\}$ of short geodesics consists of simple, disjoint geodesics with $\{(2\pi)^{-1/2}\lambda_{\alpha}\}_{\alpha\in\sigma}$ approximately an orthonormal collection.  In particular $\mu\in B_{Harm}(R)$ has a unique representation
\[
\mu\,=\,\sum_{\alpha\in\sigma}a^{\alpha}\lambda_{\alpha}\ +\ \mu_0 \quad\mbox{with}\quad \mu_0\perp\lambda_{\alpha},\alpha\in\sigma
\]
and $2\pi a^{\alpha}\approx \langle\mu,\lambda_{\alpha}\rangle$.  To describe the $C^0$ to $L^2$ norm comparison we introduce an invariant.
\begin{definition}\label{Ratio} For short geodesics $\sigma$ and a harmonic Beltrami differential $\mu$, define
\[
Comp(\mu)\,=\,\frac{\max_{\alpha\in\sigma}|\langle\mu,\lambda_{\alpha}\rangle|\ell_{\alpha}^{-1/2}+\|\mu_0\|_{L^2}}{\|\mu\|_{L^2}}.
\]
\end{definition}
The maximal value of the invariant is $systole^{-1/2}$, for systole the smallest geodesic-length.   
The following norm comparison is established in \cite[Corollary 11]{Wlcurv}.
\begin{lemma}\label{norm2} Setup as above.  Let $\sigma$ be the geodesics of length at most $c_0$. For $c_0$ sufficiently small, there are positive constants $c',c''$  such that for a harmonic Beltrami differential
\[
c'\, Comp(\mu)\le\, \frac{\|\mu\|_0}{\|\mu\|_{L^2}}\, \le c''\, Comp(\mu). 
\]
\end{lemma}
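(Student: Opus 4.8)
The plan is to prove the equivalent statement that $\|\mu\|_0$ is comparable, with constants depending only on the topological type and on $c_0$, to
\[
N(\mu)\ :=\ \max_{\alpha\in\sigma}|\langle\mu,\lambda_\alpha\rangle|\,\ell_\alpha^{-1/2}\ +\ \|\mu_0\|_{L^2}\ =\ Comp(\mu)\,\|\mu\|_{L^2};
\]
dividing through by $\|\mu\|_{L^2}$ then recovers the stated inequality, so $\|\mu\|_{L^2}$ plays no essential role. The geometric input is the standard collar about a short geodesic $\alpha$, $\ell=\ell_\alpha$: uniformizing it as $\mathbb{H}/\langle z\mapsto e^{\ell}z\rangle$ and setting $z=e^{u+i\theta}$ so that $\Lambda=(du^2+d\theta^2)/\sin^2\theta$, a harmonic Beltrami differential (with quadratic differential $q_0$, $\mu=\Lambda^{-1}\overline{q_0}$) restricted to the collar has the form $\overline{G}\,\phi^0_\alpha$ with $\phi^0_\alpha=\sin^2\theta\,e^{2i\theta}$ the zero-frequency model differential and $G=q_0z^2$ holomorphic and $\ell$-periodic in $u$. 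One computes $|\phi^0_\alpha|\le 1$, $\|\phi^0_\alpha\|_{L^1(\mathrm{collar})}=\pi\ell$, $\|\phi^0_\alpha\|^2_{L^2(\mathrm{collar})}=\pi\ell/2$, and, from the geodesic-length expansions behind \textup{(\ref{prodexp})} in \cite{Wlbhv}, that $\lambda_\alpha$ restricted to the collar is $b_\alpha\phi^0_\alpha$ modulo lower-order terms with $|b_\alpha|\asymp\ell^{-1/2}$, while $|\lambda_\alpha|\lesssim\ell^{1/2}$ off the collar.

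For the lower bound $N(\mu)\lesssim\|\mu\|_0$ the second term is immediate: $\mu_0$ is the orthogonal projection of $\mu$ off $\mathrm{span}\{\lambda_\alpha\}_{\alpha\in\sigma}$, so $\|\mu_0\|_{L^2}\le\|\mu\|_{L^2}\le\operatorname{Area}(R)^{1/2}\|\mu\|_0$ with $\operatorname{Area}(R)=-2\pi\chi(R)$ depending only on the topology. For each $\alpha$, H\"older's inequality on the collar gives $|\langle\mu,\phi^0_\alpha\rangle_{\mathrm{collar}}|\le\|\mu\|_0\,\|\phi^0_\alpha\|_{L^1}=\pi\ell\,\|\mu\|_0$; writing $\langle\mu,\lambda_\alpha\rangle=\overline{b_\alpha}\langle\mu,\phi^0_\alpha\rangle_{\mathrm{collar}}+(\text{error})$ with the error estimated by $\|\mu\|_0$ times the $L^1$ norm of the off-collar and higher-mode parts of $\lambda_\alpha$, hence by $O(\ell^{1/2})\|\mu\|_0$, one obtains $|\langle\mu,\lambda_\alpha\rangle|\,\ell^{-1/2}\lesssim\|\mu\|_0$; the maximum over $\alpha\in\sigma$ finishes this direction.

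For the upper bound $\|\mu\|_0\lesssim N(\mu)$, decompose $R$ into its thick part, the finitely many cusp regions, and the collars of the curves in $\sigma$. On the thick part $|q_0|^2$ is subharmonic and the injectivity radius is bounded below, so the sub-mean-value inequality gives $|\mu(p)|\lesssim\|\mu\|_{L^2}$; and $\|\mu\|_{L^2}\lesssim N(\mu)$, since \textup{(\ref{prodexp})} gives $\|\mu\|^2_{L^2}\asymp\sum_\gamma|a^\gamma|^2+\|\mu_0\|^2_{L^2}$ while $N(\mu)\gtrsim\max_\gamma|a^\gamma|$ and $N(\mu)\ge\|\mu_0\|_{L^2}$. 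A cusp region reduces to the thick part once one notes that $|\mu|$ decays into the cusp, so its supremum there is attained on the bounding horocycle. On the collar of $\alpha$, write $\mu|_{\mathrm{collar}}=c_0\phi^0_\alpha+\nu$ with $\nu$ the sum of the nonzero Fourier modes; the coefficient $c_0$ is $\asymp|\langle\mu,\lambda_\alpha\rangle|\,\ell^{-1/2}$ up to errors bounded by $\|\mu_0\|_{L^2}$ (using $\mu_0\perp\lambda_\alpha$) and by $\ell$ times the remaining $a^\gamma$, so $|c_0\phi^0_\alpha|\le|c_0|\lesssim N(\mu)$, while for $\nu$ one applies a Phragm\'en--Lindel\"of / Hadamard three-circles estimate to $G$ with its zero Fourier coefficient deleted, a holomorphic function on the collar annulus of modulus $\asymp 1/\ell$: its Fourier coefficients decay geometrically inward from the two boundary circles, which outweighs the $1/\sin^2\theta$ growth of $1/|\phi^0_\alpha|$, and one gets $\sup_{\mathrm{collar}}|\nu|\lesssim\sup_{\partial\,\mathrm{collar}}|\nu|\lesssim\|\mu\|_{L^2}+|c_0|\lesssim N(\mu)$.

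The main obstacle is precisely this last collar estimate: one has to split off the zero mode without losing a factor $\ell_\alpha^{-1}$, and then show that the remaining modes — whose Fourier coefficients can individually be as large as $\ell_\alpha^{-2}\sup_{\partial\,\mathrm{collar}}|\mu|$ — nonetheless sum, after multiplication by $|\phi^0_\alpha|$, to something uniformly comparable to the boundary data throughout the collar, the large modulus of the collar annulus being what makes this work. Everything else is the subharmonicity of holomorphic quadratic differentials, the bounded geometry of the thick part, and the collar asymptotics of $\lambda_\alpha$ recalled in \textup{(\ref{prodexp})}.
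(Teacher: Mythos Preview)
Your argument is correct and rests on the same two ingredients as the paper's: the pointwise asymptotics of $\lambda_\alpha$ (concentrated on its own collar with maximum $\asymp\ell_\alpha^{-1/2}$, small elsewhere) and the collar Fourier expansion showing that the non-zero modes are controlled by boundary data. The packaging differs. The paper decomposes the \emph{differential} globally as $\mu=\sum_{\alpha\in\sigma}a^\alpha\lambda_\alpha+\mu_0$ and quotes two facts from \cite{Wlcurv}: Corollary~9 there for the pointwise behaviour of $\lambda_\alpha$, and Proposition~7 (the collar expansion) for the single statement that $\|\mu_0\|_0$ is uniformly comparable to $\|\mu_0\|_{L^2}$ with $|\mu_0|$ small on the $\sigma$-collars. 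Since the maxima of the summands $a^\alpha\lambda_\alpha$ and of $\mu_0$ then occur in essentially disjoint regions, $\|\mu\|_0\asymp\max_\alpha|a^\alpha|\ell_\alpha^{-1/2}+\|\mu_0\|_{L^2}$ follows at once. You instead decompose the \emph{surface} into thick part, cusps and collars, and on each collar split off the zero Fourier mode and run the three-circles/Phragm\'en--Lindel\"of estimate yourself to control the remainder $\nu$. Your route is more self-contained and makes the role of the large annulus modulus explicit; the paper's is shorter because the hard step---your estimate $\sup_{\mathrm{collar}}|\nu|\lesssim\sup_{\partial\,\mathrm{collar}}|\nu|$---is already encapsulated in \cite{Wlcurv} as the uniform $C^0$--$L^2$ comparison for the orthogonal piece $\mu_0$.
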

\begin{proof} We sketch the earlier considerations \cite{Wlcurv}.  The substantial masses of the Beltrami differentials $\lambda_{\alpha}$ and $\mu_0$ are almost disjoint.  By Corollary 9 the root-length gradient has magnitude $O(\ell_{\alpha}^{3/2})$ on the complement of the $\alpha$-collar and has maximal magnitude $2/(\pi\ell_{\alpha}^{1/2})$ on $\alpha$.  The orthogonality $\mu_0\perp\lambda_{\alpha}$ and expansion Proposition 7 for a collar provide that $|\mu_0|$ is uniformly bounded in terms of $\|\mu_0\|_{L^2}$ and is small on the $\sigma$ collars; trivially $\|\mu_0\|_{L^2}$ is bounded by $\|\mu_0\|_0$.  It follows for $c_0$ sufficiently small, that the maxima of $|\langle\mu,\lambda_{\alpha}\rangle\lambda_{\alpha}|$ and $|\mu_0|$ can be considered separately.  The desired conclusion follows from the description of the individual Beltrami differentials.   
\end{proof}

There is a simple application of the lemma to bounding the derivative of the Riemann period matrix with respect to the Weil-Petersson metric.  Recall a compact Riemann surface $R$ of genus $g$ has a canonical homology basis $\{\alpha_j,\beta_j\}_{j=1}^g$ with intersection numbers satisfying $\alpha_j\cdot\beta_k=\delta_{jk}$.  The space of holomorphic one-forms has the Hodge inner product with norm
\[
\|\omega\|_H^2\,=\,\frac{i}{2}\int_R\omega\wedge\overline\omega,\quad \omega\in\Omega(R).
\]
There is a corresponding canonical basis $\{\omega_k\}_{k=1}^g$ of $\Omega(R)$ specified by the condition
\[
\int_{a_j}\omega_k\,=\,\delta_{jk}
\]
and the Riemann period matrix is defined by
\[
\Pi_{jk}\,=\,\int_{b_j}\omega_k.
\]
For $\omega\in\Omega(R)$, with $a$-periods the column vector $(a_j)$ and $b$-periods the column vector $(b_j)$, the periods are related as
\[
(b_j)\,=\,\big(\Pi_{jk}\big)(a_k).
\]
Rauch established the variational formula for a Beltrami differential
\[
d\Pi_{jk}[\mu]\,=\,\int_R\omega_j\omega_k\mu, \quad \cite{Rauch}.
\]
We immediately have the inequality
\[
\big|d\Pi_{jk}[\mu]\big|\,\le\,c\,\|\omega_j\|_H\|\omega_k\|_H Comp(\mu)
\]
for a $L^2$ unit-norm harmonic Beltrami differential, the bound for Weil-Petersson displacement.

\section{Applications.}

We now put the derivative bounds of Theorem \ref{main1} in context with the Bers embedding and Weil-Petersson covariant derivatives.  We begin by reviewing the direct relation between harmonic Beltrami differentials, the Bers embedding and the Weil-Petersson metric.

Start with the uniformization $\mathbb H/\Gamma$ of the Riemann surface $R$.  Let $B(\Gamma)$ be the $L^{\infty}$ Banach space of $\Gamma$-invariant Beltrami differentials on $\mathbb H$ and $M(\Gamma)\subset B(\Gamma)$ its open unit ball.  Given $\nu\in M(\Gamma)$, extend the differential by zero on the lower half plane $\mathbb L$.  There exists a homeomorphism solution $W\nnu$ of the equation $W_{\bar z}=\nu W_z$ on the complex plane $\mathbb C$ \cite{AB}.  The map $W\nnu$ is holomorphic on the lower half plane; the map is unique provided it fixes $0$ and $1$.  A most important feature is that $\Gamma\nnu=W\nnu\circ\Gamma\circ(W\nnu)^{-1}$ is a Kleinian group and the map $W\nnu$ is $\Gamma-\Gamma\nnu$-equivariant.  In particular $W\nnu(\mathbb H)/\Gamma\nnu$ is a Riemann surface 
and $W\nnu$ gives a homotopy class of a map $R\rightarrow R\nnu$.  The map $W\nnu$ as an element of an appropriate Banach space depends complex analytically on $\nu\in M(\Gamma)$.  An equivalence relation on $M(\Gamma)$ is defined as: $\mu\sim\nu$ provided $W\nnu\vert_{\mathbb R}=W\mmu\vert_{\mathbb R}$. The quotient $M(\Gamma)\slash\sim$ is the Teichm\"{u}ller space of $R$.  

The Schwarzian derivative of a locally injective conformal map $f(z)$ is the quantity $\{f,z\}=(f'''/f')-(3/2)(f''/f')^2$ .   The group equivariance of the map $W\nnu$ provides that $\{W\nnu,z\}\vert_{\mathbb L}$ is a $\Gamma$ invariant holomorphic quadratic differential.  The quotient $\mathbb L/\Gamma$ is the conjugate Riemann surface $\bar{R}$. The map $M(\Gamma)\rightarrow Q(\bar R)$ factors through the projection to equivalence classes and gives rise to an injection of Teichm\"{u}ller space, the Bers embedding,
\[
T(R)\rightarrow Q(\bar R)\quad\mbox{given by the mapping}\quad \nu/\sim\ \rightarrow\,\{W\nnu,z\},
\]
\cite{Bersembed}. The image is a bounded domain.  

The range space of the Bers embedding can be provided with the supremum metric: for  $\varphi\in Q(\bar R),\, \|\varphi\|_0=\max |\varphi|\Lambda^{-1}$ for $\Lambda$ the hyperbolic area element. The classical necessary and sufficient conditions for a conformal injection provide that the Bers embedding contains the $\|\ \|_0$-ball of radius $1/2$ and is contained in the $\|\ \|_0$-ball of radius $3/2$, \cite{Lehtobk}.  Ahlfors-Weill described a section of the Bers embedding \cite{AhWe}.  For $\psi\in Q(\bar R)$ with $\|\psi\|_0<1/2$, then $\{W^{-2\overline{\psi(\bar z)}\Lambda^{-1}},z\}=\psi$ - the harmonic Beltrami differential $-2\overline{\psi(\bar z)}\Lambda^{-1}\in M(\Gamma)$ maps to the quadratic differential $\psi$ by the Bers embedding. Accordingly, we can also consider the Bers embedding as mapping to the space of harmonic Beltrami differentials with norm $\|\ \|_0$.   

A manifold local coordinate $z\in\mathbb C^n$ is $k$-normal for a K\"{a}hler metric $g$, provided the power series expansion in $z$ of $g$ has vanishing pure-$z$ and pure-$\bar z$ terms for total homogeneity terms at most $k$.   A coordinate is $\omega$-normal provided it is $k$-normal for all positive integers $k$.  The hyperbolic metric for a disc is $\omega$-normal.  Given a K\"{a}hler metric $g$, a tangent frame $\mathcal F$ at a point $p$, then provided it exists, the $\omega$-normal coordinate $z$ with $z(p)=0$ and initial frame $\mathcal F$ is unique \cite{Gilkbk}. 

The main theorem of \cite[Theorem 4.5]{Wlbers} and the above Theorem \ref{main1} are combined to the following.  The norm $\|\ \|_0$ provides a displacement scale for derivatives. 
\begin{theorem}   The Bers embedding provides $\omega$-normal coordinates for the Weil-Petersson metric. For an initial orthonormal basis of harmonic Beltrami differentials, the derivatives of the Weil-Petersson metric tensor at the origin of the embedding are uniformly bounded depending only on the differentiation order. 
\end{theorem}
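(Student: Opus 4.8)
The first assertion is \cite[Theorem 4.5]{Wlbers}. For the second, the plan is to make the Bers coordinate explicit and reduce everything to Theorem \ref{main1}. The Ahlfors-Weill section realizes the Bers embedding, near the origin, as the map into $B_{Harm}(R)$ with the supremum norm noted before the statement: for $\psi\in Q(\bar R)$ with $\|\psi\|_0<1/2$ it sends the point with harmonic Beltrami representative $-2\overline{\psi(\bar z)}\Lambda^{-1}$ to $\psi$, and $\|-2\overline{\psi(\bar z)}\Lambda^{-1}\|_0=2\|\psi\|_0$. Thus near the origin the Bers coordinate is the linear parameterization $\mu\mapsto R^{\mu}$ of $T(R)$ by $\mu\in B_{Harm}(R)$, calibrated by $\|\ \|_0$. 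Given an $L^2$-orthonormal basis $\{\mu_j\}_{j=1}^{n}$ of $B_{Harm}(R)$, I would use the coordinate $t=(t_1,\dots,t_n)$ of $\mu(t)=\sum_{j}t_j\mu_j$; by Lemma \ref{Lmu} its coordinate vector fields are the $L^{\mu(t)}\mu_j$, equal to $\mu_j$ at the origin, so $\{\mu_j\}$ is the initial frame.

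Next I would identify the metric in this coordinate. By (\ref{wpdefn}), $g_{i\bar j}(t)=\langle P^{\mu(t)}L^{\mu(t)}\mu_i,\,P^{\mu(t)}L^{\mu(t)}\mu_j\rangle^{\mu(t)}$, which is precisely the Weil-Petersson coordinate pairing $g_{ij}(\mu)$ of Theorem \ref{main1} composed with the linear isomorphism $t\mapsto\mu(t)$ of $\mathbb{C}^{n}$ onto $B_{Harm}(R)$ (a subspace of $S_{2,\alpha}(-2)$ by Lemma \ref{norm1}). At $t=0$ one has $g_{i\bar j}(0)=\langle\mu_i,\mu_j\rangle=\delta_{ij}$, so the frame is $L^2$-orthonormal as required. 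Theorem \ref{main1} then supplies, for each non negative integer $k$, a surface-independent constant $C_k$ bounding the operator norm of the $k$-th derivative $D^{k}g_{ij}(0)$---a $k$-linear form on $B_{Harm}(R)$, the considerations applying to real and to complex derivatives alike---with respect to the $C^{2,\alpha}$ norm.

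The remaining step is to pass to the displacement scale native to the Bers embedding, namely $\|\ \|_0$. By Lemma \ref{norm1} the $C^{2,\alpha}$ and $C^{0}$ norms on the finite-dimensional space $B_{Harm}(R)$ are comparable with a constant independent of the surface, whence $|D^{k}g_{ij}(0)[\nu_1,\dots,\nu_k]|\le C_k C_{2,\alpha}^{k}\prod_{l}\|\nu_l\|_0$. Transporting along $t\mapsto\mu(t)$, this asserts exactly that the $k$-th derivative of $g_{i\bar j}$ at the origin of the Bers embedding, measured per unit $\|\ \|_0$-displacement, is bounded by a constant depending only on $k$ once $\alpha$ is fixed. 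Combined with the $\omega$-normality of \cite[Theorem 4.5]{Wlbers}---which makes the pure-$t$ and pure-$\bar t$ Taylor coefficients of $g_{i\bar j}$ vanish---this gives the theorem.

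The single point I expect to need care is the bookkeeping of scales. Theorem \ref{main1} naturally delivers its estimates in the $C^{2,\alpha}$ scale on the deformation parameter, and it is essential---and special to the finite dimensionality of $B_{Harm}(R)$, through the surface-uniform comparison of Lemma \ref{norm1}---that these convert to $C^{0}$ estimates, since $\|\ \|_0$ is what calibrates the Bers embedding and furnishes its displacement scale. Measured instead in the $L^{2}$-displacement scale---the genuine Weil-Petersson scale, in which the chosen frame is orthonormal---the ratios $\|\mu_j\|_0/\|\mu_j\|_{L^2}$ re-enter, and by Lemma \ref{norm2} one recovers only the $systole^{-k/2}$ estimate of the Corollary; the uniform boundedness asserted here is therefore intrinsically a statement about the supremum-norm coordinate.
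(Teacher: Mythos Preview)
Your proposal is correct and follows the same approach as the paper, which simply states that the theorem combines \cite[Theorem 4.5]{Wlbers} with Theorem \ref{main1}, noting that $\|\ \|_0$ provides the displacement scale. You have spelled out the steps the paper leaves implicit: the Ahlfors--Weill section identifies the Bers coordinate near the origin with the linear parameterization by harmonic Beltrami differentials, Theorem \ref{main1} supplies the derivative bounds in the $C^{2,\alpha}$ scale, and Lemma \ref{norm1} converts these uniformly to the $C^0$ scale native to the Bers embedding.
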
 

We follow Bochner's approach \cite{Boch} and present formulas in terms of the complexification of the tangent space.  Bochner observed for a K\"{a}hler metric 
\[
ds^2\,=\,2g_{\alpha\overline{\beta}}dt_{\alpha}d\overline{t_{\beta}}
\]
that the standard formulas simplify (as always repeated indices are summed).  The Christoffel symbols for the covariant derivative involve only unmixed components
\[
\Gamma^{\alpha}_{\beta\gamma}\,=\,g^{\alpha\overline{\sigma}}\frac{\partial g_{\overline{\sigma}\beta}}{\partial t_{\gamma}}
\]
and the components of the curvature tensor are
\[
R_{\alpha\overline{\beta}\gamma\overline{\delta}}\,=\,\frac{\partial^2g_{\alpha\overline{\beta}}}{\partial t_{\gamma}\partial\overline{t_{\delta}}}\,-\,g^{\rho\overline{\sigma}}
\frac{\partial g_{\rho\overline{\beta}}}{\partial\overline{t_{\delta}}}\frac{\partial g_{\overline{\sigma}\alpha}}{\partial t_{\gamma}}.
\]
The K\"{a}hler condition provides the symmetries
\[
R_{\alpha\overline{\beta}\gamma\overline{\delta}}\,=\,R_{\gamma\overline{\beta}\alpha\overline{\delta}}\,=\,R_{\alpha\overline{\delta}\gamma\overline{\beta}}.
\]
We continue to assume that the initial tangent frame is orthonormal.  The formulas for a general frame are obtained by applying linear transformations.   The final bounds are presented in terms of the Definition \ref{Ratio} norm comparison.

\begin{theorem}\label{main2} For each non negative integer $m$, and each non negative multi index $\mathbf k=(k_1,\dots,k_n)$, there are positive constants $C_m$ and $C_{\mathbf k}$ as follows.  Let $\{\mu_{\alpha}\}_{\alpha=1}^n$ be an orthonormal basis for $B_{Harm}(R)$.  For $t=(t_1,\dots,t_n)$ small, let $\mu=\sum_{\alpha=1}^nt_{\alpha}\mu_{\alpha}$ and $g_{\alpha\overline{\beta}}(\mu)\,=\,\langle P\mmu L\mmu\mu_{\alpha},P\mmu L\mmu \mu_{\beta}\rangle\mmu$ be the Weil-Petersson pairing for the local coordinate $t$.  The initial derivatives of the metric are bounded as 
\[
\Big|\Big(\frac{\partial}{\partial t_1}\Big)^{k_1}\cdots\Big(\frac{\partial}{\partial t_n}\Big)^{k_n} g_{\alpha\overline{\beta}}(0)\Big|\,\le\,C_{\mathbf k}\,Comp(\mu_1)^{k_1}\cdots Comp(\mu_n)^{k_n}\ \ \mbox{for all}\ \alpha,\beta.
\]
For elements $\nu_1,\dots,\nu_m\in B_{Harm}(R)$, the covariant derivatives of curvature are bounded as
\[
\Big|\mathbf D_{\nu_1}\cdots\mathbf D_{\nu_m}R_{\alpha\overline{\beta}\gamma\overline{\delta}}\Big|\,\le\,C_m\, Comp(\mu_{\gamma})\,Comp(\mu_{\delta})\,Comp(\nu_1)\cdots Comp(\nu_m).
\]
\end{theorem}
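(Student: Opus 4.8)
The plan is to reduce everything to Theorem \ref{main1} and the norm comparison Lemma \ref{norm2}, so that the only substantive content is bookkeeping the powers of $Comp$ that arise from rescaling an orthonormal frame. First I would observe that Theorem \ref{main1} already gives, for the orthonormal basis $\{\mu_\alpha\}$, that each derivative $(\partial/\partial t_1)^{k_1}\cdots(\partial/\partial t_n)^{k_n}g_{\alpha\overline\beta}(0)$ is uniformly bounded by a constant depending only on $\mathbf k$ --- but \emph{only} when the differentiation directions are the $\mu_\alpha$ measured in the $L^2$ norm. The passage from that statement to the Bers-coordinate (equivalently $\|\ \|_0$-calibrated) statement requires replacing each $\partial/\partial t_\alpha$ by $\|\mu_\alpha\|_0^{-1}\,\partial/\partial t_\alpha$, or rather by recognizing that a $k_\alpha$-fold derivative along the $\|\ \|_0$-normalized direction picks up a factor $(\|\mu_\alpha\|_0/\|\mu_\alpha\|_{L^2})^{k_\alpha}$; by Lemma \ref{norm2} this ratio is comparable to $Comp(\mu_\alpha)$. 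Chaining over $\alpha=1,\dots,n$ yields the stated product bound $C_{\mathbf k}\,Comp(\mu_1)^{k_1}\cdots Comp(\mu_n)^{k_n}$.

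Next I would handle the covariant derivatives of curvature. Using the K\"ahler formulas of Bochner quoted above, $R_{\alpha\overline\beta\gamma\overline\delta}$ is a universal polynomial in the $g_{\alpha\overline\beta}$, the inverse metric $g^{\alpha\overline\beta}$, and their coordinate derivatives up to order two, and each covariant derivative $\mathbf D_\nu$ adds one more coordinate derivative together with Christoffel correction terms (again polynomial in the same data). Since the initial frame is orthonormal, $g_{\alpha\overline\beta}(0)=\delta_{\alpha\beta}$ and $g^{\alpha\overline\beta}(0)=\delta_{\alpha\beta}$, so the inverse metric and its derivatives at the origin are themselves universal polynomials in the $g_{\alpha\overline\beta}$-derivatives via the expansion of $g^{-1}$; these are therefore all uniformly bounded in the $L^2$-calibrated frame by the first part. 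I would then track which coordinate directions appear: in $\mathbf D_{\nu_1}\cdots\mathbf D_{\nu_m}R_{\alpha\overline\beta\gamma\overline\delta}$ the ``free'' holomorphic directions that cannot be absorbed are exactly $\gamma,\delta$ (from the curvature tensor) and the $m$ differentiation directions $\nu_1,\dots,\nu_m$ --- the indices $\alpha,\beta$ are contracted into the metric pairing and contribute bounded $O(1)$ factors only. Expanding each $\nu_i$ in the basis $\{\mu_\alpha\}$ and each appearance of a coordinate derivative in direction $\mu_\alpha$ as a factor $Comp(\mu_\alpha)$ (using $Comp$ sublinear/subadditive enough, or more simply bounding $Comp(\nu_i)$ directly from Definition \ref{Ratio} since $Comp$ is a ratio of norms and hence controls linear combinations up to the dimension constant $n$), one arrives at the bound $C_m\,Comp(\mu_\gamma)\,Comp(\mu_\delta)\,Comp(\nu_1)\cdots Comp(\nu_m)$.

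The step I expect to be the main obstacle is making the ``free direction'' accounting rigorous, i.e.\ verifying that no uncontrolled factor of $Comp$ sneaks in from the contracted indices $\alpha,\beta$ or from the inverse-metric expansions inside the Christoffel symbols. Concretely: $\Gamma^{\alpha}_{\beta\gamma}=g^{\alpha\overline\sigma}\partial g_{\overline\sigma\beta}/\partial t_\gamma$ has one coordinate derivative in direction $\gamma$, so it contributes a $Comp(\mu_\gamma)$ factor; but when a covariant derivative $\mathbf D_{\nu_i}$ differentiates $\Gamma$, one must confirm that the \emph{value} of the new Christoffel term (not just its derivative) is $O(1)$ at the origin, which uses that $\Gamma(0)=0$ in $\omega$-normal coordinates together with the first-part bound on $\partial\Gamma$. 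I would organize this as an induction on $m$: the inductive hypothesis is that $\mathbf D_{\nu_1}\cdots\mathbf D_{\nu_{m-1}}R$ is, as a function on the coordinate chart near the origin, a universal polynomial in the metric derivatives whose every monomial carries exactly the factors $Comp(\mu_\gamma)Comp(\mu_\delta)\prod_{i<m}Comp(\nu_i)$ (with bounded coefficient), and the inductive step checks that applying $\mathbf D_{\nu_m}$ --- one coordinate derivative plus Christoffel corrections --- multiplies each monomial by at most one further $Comp(\nu_m)$ and preserves uniform boundedness of coefficients. Once that structural claim is in place, evaluating at $t=0$ and invoking Lemma \ref{norm2} to convert $\|\ \|_0/\|\ \|_{L^2}$ ratios into $Comp$ finishes the proof. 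The remaining verifications --- density of smooth compactly supported sections, equivalence of Sobolev norms, analyticity --- are all supplied by the earlier lemmas and need not be revisited.
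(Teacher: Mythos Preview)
Your proposal follows essentially the same two-step outline as the paper: (i) homogeneity/rescaling plus Theorem \ref{main1} and Lemma \ref{norm2} for the coordinate-derivative bound, and (ii) the observation that each iterated covariant derivative of curvature is a fixed universal polynomial in coordinate derivatives of the metric, so the bound propagates. The paper's proof is in fact much briefer than yours---it simply writes down the tensor formula for $\mathbf D_v\mathbf T$ and asserts ``the conclusion now follows from the coordinate derivative estimate''---so your induction sketch and your explicit tracking of which indices appear as differentiation directions versus metric subscripts is additional (welcome) detail rather than a different approach.

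One correction to your framing of part (i): you have the direction of the rescaling reversed. Theorem \ref{main1} bounds the derivatives of $g_{\alpha\overline\beta}$ as functions of $\mu\in S_{2,\alpha}(-2)$, i.e.\ the Fr\'echet derivatives are uniformly bounded when evaluated on vectors of unit $C^{2,\alpha}$ (equivalently, by Lemma \ref{norm1}, unit $\|\cdot\|_0$) norm. The coordinates $t_j$, however, are $L^2$-calibrated, since the $\mu_j$ are $L^2$-orthonormal. So one passes \emph{from} the $\|\cdot\|_0$-calibrated bound of Theorem \ref{main1} \emph{to} the $L^2$-calibrated statement of the theorem, not the other way around. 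The scaling factor you write down, $(\|\mu_\alpha\|_0/\|\mu_\alpha\|_{L^2})^{k_\alpha}\asymp Comp(\mu_\alpha)^{k_\alpha}$, is nevertheless the correct one, because $\partial/\partial t_\alpha$ is $\|\mu_\alpha\|_0$ times the $\|\cdot\|_0$-unit direction; so your conclusion for part (i) stands.

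For part (ii), you correctly isolate the genuine bookkeeping issue: ensuring that contracted dummy indices in the Christoffel corrections do not introduce stray factors of $Comp(\mu_\sigma)$ beyond the free indices $\gamma,\delta,\nu_1,\dots,\nu_m$. The paper does not spell this out; your induction plan (use $\Gamma(0)=0$ in $\omega$-normal coordinates, then check that each new Christoffel correction carries exactly one additional coordinate derivative in the direction $\nu_m$) is the right mechanism. In carrying it out, note that at the origin the surviving derivatives of $\Gamma$ are themselves mixed second derivatives of $g$, hence curvature components, and the K\"ahler symmetries $R_{\alpha\overline\beta\gamma\overline\delta}=R_{\gamma\overline\beta\alpha\overline\delta}=R_{\alpha\overline\delta\gamma\overline\beta}$ let you move any dummy index out of a differentiation slot and into a subscript slot before applying part (i); this is what keeps the sums over dummy indices from contributing more than a dimension constant.
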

\begin{proof}Coordinate derivatives are homogeneous in the involved tangent vectors.  The coordinate derivative bounds follow from Theorem \ref{main1} by scaling the tangent vectors to unit Weil-Petersson length. The scaling is bounded by Lemma \ref{norm2}.  For the second set of bounds it is enough to consider evaluation for coordinate vector fields since the curvature and its covariant derivatives are tensors.  For a $q$-covariant tensor 
$\mathbf T$, vector fields $V_1,\dots,V_q$ and vector $v$, the covariant derivative is 
\begin{multline*}
\mathbf D_v \mathbf T(V_1,\dots,V_q)\,=\\ v(\mathbf T(V_1,\dots,V_q))\, -\, \mathbf T(\mathbf D_vV_1,\dots,V_q)\,-\,\cdots \,-\, \mathbf T(V_1,\dots,V_{q-1},\mathbf D_vV_q).
\end{multline*}
Each covariant derivative of curvature is a fixed polynomial in the coordinate derivatives of the metric tensor.  The conclusion now follows from the coordinate derivative estimate.
\end{proof}

The above provides for the general boundedness of Weil-Petersson curvature and its covariant derivatives parallel to the compactification divisor of the moduli space of Riemann surfaces.  Neighborhoods of the compactification divisor are described by collections of disjoint simple geodesics having small geodesic-lengths.  Consider such a collection of geodesics $\sigma$  and a neighborhood where the geodesic-lengths $\ell_{\alpha},\, \alpha\in\sigma$ are small.  In the augmented Teichm\"{u}ller space $\overline T$, the locus $\{\ell_{\alpha}=0\mid\alpha\in\sigma\}$ defines a boundary product of lower dimensional Teichm\"{u}ller spaces $T(\sigma)$ \cite[Section 4.1]{Wlbhv}.  A relative length basis for a point of $T(\sigma)$ is a collection $\tau$ of free homotopy classes, disjoint from the elements of $\sigma$, such that the geodesic-length functions $\ell_{\beta},\,\beta\in\tau$ provide local coordinates for a neighborhood of the point on $T(\sigma)$ \cite[Definition 4.1]{Wlbhv}.  In a neighborhood of the point in $\overline T$, the gradients $\{\grad \ell_{\alpha}^{1/2},i\grad \ell_{\alpha}^{1/2}, \grad \ell_{\beta}^{1/2}\}_{\alpha\in\sigma,\beta\in\tau}$ provide a $\mathbb R$-frame for the tangent bundle.  It follows from the inner product expansion (\ref{prodexp}), that for $\ell_{\beta}\le c,\,\beta\in\tau$, there is a constant $c'$ such that $Comp(\grad\ell_{\beta})\le c',\,\beta\in\tau$.  From the theorem, the Weil-Petersson metric and its derivatives are bounded for all tangents in the span of the relative length basis gradients.    

    
\providecommand\WlName[1]{#1}\providecommand\WpName[1]{#1}\providecommand\Wl{Wlf}\providecommand\Wp{Wlp}\def\cprime{$'$}

\end{document}